\newtheorem{theorem}{Theorem}[section]
\newtheorem{lemma}{Lemma}[section]
\newtheorem{remark}{Remark}[section]
\newtheorem{example}{Example}
\newtheorem{assumption}{Assumption}[section]
\begin{document}

\title{Dimension-independent convergence rate of propagation of chaos and numerical analysis for McKean-Vlasov stochastic differential equations with coefficients nonlinearly dependent on measure}

\author[a,b]{Yuhang Zhang}
\author[c]{Minghui Song\thanks{Corresponding author: songmh@hit.edu.cn}}
\affil[a]{School of Astronautics, Harbin Institute of Technology, Harbin, China}
\affil[b]{Zhengzhou Research Institute, Harbin Institute of Technology, Zhengzhou, China}
\affil[c]{School of Mathematics, Harbin Institute of Technology, Harbin, China}

\date{}

\maketitle

\begin{abstract}
In contrast to ordinary stochastic differential equations (SDEs), the numerical simulation of McKean-Vlasov stochastic differential equations (MV-SDEs) requires approximating the distribution law first. Based on the theory of propagation of chaos (PoC), particle approximation method is widely used. Then, a natural question is to investigate the convergence rate of the method (also referred to as the convergence rate of PoC). In fact, the PoC convergence rate is well understood for MV-SDEs with coefficients linearly dependent on the measure, but the rate deteriorates with dimension $d$ under the $L^p$-Wasserstein metric for nonlinear measure-dependent coefficients, even when Lipschitz continuity with respect to the measure is assumed. The main objective of this paper is to establish a dimension-independent convergence result of PoC for MV-SDEs whose coefficients are nonlinear with respect to the measure component but Lipschitz continuous. As a complement we further give the time discretization of the equations and thus verify the convergence rate of PoC using numerical experiments.\\

\noindent{\bf Keywords:}~~McKean-Vlasov stochastic differential equations (MV-SDEs);  Propagation of chaos (PoC);  Convergence rate; Dimension-independent; Tamed Euler-Maruyama (EM) method.
\end{abstract}


\section{Introduction}
This paper establishes the convergence rate of propagation of chaos (PoC) under the $L^p$-Wasserstein metric and the $L^p$ convergence rate of the tamed Euler-Maruyama (EM) method to the McKean-Vlasov stochastic differential equations (MV-SDEs) with super-linear growth in the spatial component in the drift and Lipschitz continuity in the measure component.  MV-SDEs refer to the whole family of SDEs whose coefficients depend not only on the state components but also on their probability distributions. These equations were first proposed by H. McKean as a stochastic model naturally associated with a class of nonlinear parabolic equations (\cite{McKean1966,McKean1967}), and are now widely studied due to their important applications in areas such as statistical physics (see e.g. \cite{Martzel2001,Kolokoltsov2010}), mathematical biology (\cite{Baladron2012,Bossy2015}), mathematical finance(\cite{J.Zhang2021}), among others.

MV-SDEs, being clearly more involved than the classical It\^o’s SDEs, let us first introduce some studies on the well-posedness of the exact solution of such equations. Strong existence and uniqueness results under global Lipschitz conditions were established via fixed-point theorems (\cite{Sznitman1991book, Carmona2016, Bahlali2020}). Subsequent work extended these results to cases with one-sided Lipschitz drift and globally Lipschitz diffusion coefficients (\cite{G.dosReis2019} and \cite{WangFengyu2018}). \cite{Kumar2022} proved existence and uniqueness for MV-SDEs with superlinear growth in both drift and diffusion coefficients. Further studies on well-posedness can be found in \cite{Bauer2018, Chaudru2020, Mishura2020, FukeWu2023,Xingyuan Chen2023Arxiv} and related references.

Despite theoretical guarantees on well-posedness, explicit solutions for MV-SDEs with superlinear coefficients remain elusive, necessitating numerical methods. The numerical framework McKean-Vlasov equations were initially proposed in \cite{Bossy1997} and have been investigated further in a number of more recent works. Ref. \cite{Bossy1997} suggests that the numerical discretization of MV-SDEs should involve two steps: first, approximating the true measures in the coefficients with empirical measures; second, time discretization of the resulting particle system. To be more specific, consider the MV-SDEs of the form
\begin{align}\label{MVSDEs*}
       {\rm d}x_t=f(x_t,\mu_t^x){\rm d}t+g(x_t,\mu_t^x){\rm d}W_t, ~t\in[0,T], 
\end{align}
with the initial data $x_0$, where $\mu_t^x$ denotes the law of the process $x$ at time $t$, i.e., $\mu_t^x=\mathbb{P}\circ x_t^{-1}$. To simulate this system, let $(x_0^i,W^i)_{1\le i\le N}$ be $N$ independent copies of $(x_0,W)$, each generating a particle through 
\begin{align}\label{NIPS*}
{\rm d}x_t^i= f(x_t^i,\mu_t^{x^i}) {\rm d}t+ g(x_t^i,\mu_t^{x^i}){\rm d}W^i_t.
\end{align}
Let these particles interact with each other through their empirical measure, i.e., introduce $X_t^{i,N}$, the solution of the following interacting particle system 
\begin{align}\label{IPS*}
{\rm d}X_t^{i,N}= f(X_t^{i,N},\hat\mu_t^{X,N}) {\rm d}t+ g(X_t^{i,N},\hat\mu_t^{X,N}){\rm d}W^i_t, \quad X_0^{i,N}=x_0^i,
\end{align}
where $\hat\mu_t^{X,N}=\frac{1}{N}\sum_{j=1}^N\delta_{X_t^{j,N}}$ is the empirical measure. As shown in \cite{Bossy1997}, the empirical distribution $\hat\mu_t^{X,N}$ converges to $\mu_t^{x^i}$ (and by uniqueness $\mu_t^{x^i}=\mu_t^{x}$), enabling approximation of the true measure. With this particle approximation in hand, discretizing \eqref{IPS*} in time yields a numerical solution for the MV-SDEs \eqref{MVSDEs*}.

Along this line, analyzing the convergence of numerical solutions to \eqref{MVSDEs*} requires studying the PoC rate, i.e., the convergence of the interacting particle system \eqref{IPS*} to the non-interacting limit\eqref{NIPS*}. While Wasserstein convergence rates for linear functionals of measures are well understood, nonlinear functionals often exhibit dimension-dependent rates under Wasserstein metrics (\cite{Carmona2018, Fournier2015, Guo2024ESAIM, ShuaibinGao2024,Szpruch2021}). However, some recent work suggest that a dimension-independent $\mathcal{O}(N^{-1/2})$ rate is achievable for specific classes of MV-SDEs (\cite{F.Delarue2019,S.Meleard1996,Szpruch2021}). Moreover, the dimension-independent $\mathcal{O}(N^{-1/2})$ rate is estimated numerically in \cite{Xingyuan Chen2023Arxiv}, and the authors highlight gaps in theoretical results, underscoring the need for further study. In this paper, we propose a specific class of MV-SDEs whose coefficients nonlinearly dependent on the measure, and prove a dimension-independent $\mathcal{O}(N^{-1/2})$ PoC rate under the $L^p(p\ge 2)$-Wasserstein metric. This is a different set of sufficient conditions compared to the existing results in the literature with the same rate, such as Lemma 5.1 in \cite{F.Delarue2019} and Theorem 4.2 in \cite{Szpruch2021}. Furthermore, note that the computational cost of simulating a particle system is $\mathcal{O}(N^2)$ at each time step, methods to reduce the cost can be found such as the projected particle method in \cite{DENIS2018} and the random batch method in \cite{JinShi2020JCP, JinShi2021SIAM}, but this issue is not considered in the present paper.

To demonstrate the eventual convergence of the numerical approximation and to give numerical tests, one needs to prove that the time-stepping scheme for the interacting particle system is convergent. Inspired by the numerical methods for SDEs (\cite{Higham2021,Kelly2023,MR3037286,MR3070913}), there are a few of results focus on the analysis of the time-stepping methods for MV-SDEs with various continuity coefficients. For example, for MV-SDEs with one-sided Lipschitz drift in the state variable, \cite{G.dosReis2022} analyzed strong convergence for the tamed EM method and the drift-implicit EM method, \cite{Reisinger2022} proposed an adaptive EM scheme, \cite{BaoJianhai2021} introduced a tamed Milstein scheme. While under a general Khasminskii-type condition, instead of imposing a one-sided and global Lipschitz condition on the drift and diffusion coefficients, respectively, \cite{Kumar2022} applied both tamed EM and Milstein schemes, \cite{Guo2024ESAIM}  developed the truncated EM scheme. Here, we use the tamed EM method for time discretization and show that it converges with order $\mathcal{O}(\Delta^{1/2})$ which is consistent with the classical results.

This paper is organized as follows: Section \ref{sec2} outlines notations and the preliminary analysis. Section \ref{sec3} shows that the strong PoC rate could be $\mathcal{O}(N^{-1/2})$ and dimension-independent for the proposed nonlinear MV-SDEs, and establishes the $L^p$-convergence of the tamed EM method simutaneously. Section \ref{Numerical simulation} presents numerical experiments to validate the theoretical PoC rate.

\section{Problem formulation}\label{sec2}
\subsection{Notations}\label{subsec2.1}
Throughout this paper, let $(\Omega ,\mathcal{F},\left\{\mathcal{F}_t\right\}_{t\ge 0},\mathbb{P})$ be a complete probability space with a filtration $\left\{\mathcal{F}_t\right\}_{t\ge 0}$ satisfying the usual conditions (i.e., it is right continuous and $\mathcal{F}_0$ contains all $\mathbb{P}$-null sets), and let $\mathbb{E}$ denote the expectation corresponding to $\mathbb{P}$. Let $\vert x\vert$ denote the Euclidean vector norm and $\langle x, y\rangle$ denote the inner product of vectors $x, y$. If $A$ is a vector or matrix, its transpose is denoted by $A^{\rm T}$. If A is a matrix, let $\vert A\vert$ denote its trace norm, i.e., $\vert A\vert = \sqrt{{\rm trace}(A^{\rm T} A)}$. For $a,b\in \mathbb R, a\vee b:=\max\{a,b\}, a\wedge b:=\min\{a,b\}$. 
In addition, we use $\mathcal{P}(\mathbb{R}^d)$ to denote the family of all probability measures on $(\mathbb{R}^d, \mathcal{B}(\mathbb{R}^d))$, where $\mathcal{B}(\mathbb{R}^d)$ denotes the Borel $\sigma$-field over $\mathbb{R}^d$, and define the subset of probability measures with finite $p$-th moment by 
\begin{align*}
\mathcal{P}_p(\mathbb{R}^d):=\left\{\mu\in\mathcal{P}(\mathbb{R}^d)\Big\vert \int_{\mathbb{R}^d}  \vert x\vert^p\mu(dx)<\infty\right\}.
\end{align*}
For $\mu, \sigma \in\mathcal{P}_p(\mathbb{R}^d), p\ge1$, the Wasserstein distance between $\mu$ and $\sigma$ is defined as
\begin{align*}
\mathbb{W}_p(\mu,\sigma):=\inf_{\pi\in\mathcal{C}(\mu,\sigma)}\left(\int_{\mathbb{R}^d\times\mathbb{R}^d}  \vert x-y\vert^p\pi(dx,dy)\right)^{1/p}
\end{align*}
where $\mathcal{C}(\mu,\sigma)$ means all the couplings of $\mu$ and $\sigma$, i.e., $\pi\in\mathcal{C}(\mu,\sigma)$ if and only if $\pi(\cdot,\mathbb{R}^d)=\mu(\cdot)$ and $\pi(\mathbb{R}^d,\cdot)=\sigma(\cdot)$. For $t\in[0,T]$, $p>0$, $L_t^{p}(\mathbb{R}^d)$ denotes the family of $\mathbb{R}^d$-valued $\mathcal{F}_t$-measurable random variables $\xi$ with $\mathbb{E}\vert \xi\vert^p<\infty$. As usual notations, “i.i.d.” means “independent and identically distributed”. 


\subsection{Problem formulation}\label{subsec2.2}
Let $x_0\in L_0^p(\mathbb{R}^d)$ for some given $p\ge 2$ and independent of the $m_0$-dimensional Brownian motion $W$. Let $T>0$, $a:\mathbb{R}^d\to \mathbb{R}^d$, in the following, we work on the nonlinear MV-SDEs: for $i=1,2,\dots, N$,
\begin{align}\label{NIPS}
{\rm d}x_t^i=& a(x_t^i){\rm d}t+f(x_t^i,\mu_t^{x^i}){\rm d}t+ g(x_t^i,\mu_t^{x^i}){\rm d}W^i_t,~t\in[0,T],
\end{align}
where $(x_0^i,W^i)_{1\le i\le N}$ are independent copies of $(x_0,W)$. For $t\in[0,T]$, define $\delta_t=[t/\delta]\delta$ for $\delta>0$ with $[t/\delta]$ denotes the integer part of $t/\delta$, $\hat\mu_t^{X,N,\delta}=\frac{1}{N}\sum_{j=1}^N\delta_{X_t^{j,N,\delta}}$. Let $X_0^{i,N,\delta}=x_0^i$ for $i=1,\dots, N$, $\tilde{a}:\mathbb{R}^d\to \mathbb{R}^d$, we also consider the interacting particle system
\begin{align}\label{IPS-2}
{\rm d}X_t^{i,N,\delta}=& \tilde{a}(X_{\delta_t}^{i,N,\delta}){\rm d}t+f(X_{\delta_t}^{i,N,\delta},\hat\mu_{\delta_t}^{X,N,\delta}){\rm d}t+g(X_{\delta_t}^{i,N,\delta},\hat\mu_{\delta_t}^{X,N,\delta}){\rm d}W^i_t,~t\in[0,T]
\end{align}
with coefficients taking the form of 
\begin{itemize}
\item[(i)] nonlinear distribution-dependence case:
\begin{align}\label{coefficient-1}
f(x,\mu)=A\left(\int_{\mathbb{R}^d}\kappa(x,y)\mu({\rm d}y) \right),~g(x,\mu)=B\left(\int_{\mathbb{R}^d}\zeta(x,y)\mu({\rm d}y) \right),
\end{align}
where $\kappa:\mathbb{R}^d\times\mathbb{R}^d\to \mathbb{R}^d$,  $A:\mathbb{R}^d\to\mathbb{R}^d$, $\zeta:\mathbb{R}^d\times\mathbb{R}^d\to \mathbb{R}^{d\times m_0}$, $B:\mathbb{R}^{d\times m_0}\to\mathbb{R}^{d\times m_0}$. 
\item[(ii)] more general nonlinear dependence case:
\begin{align}
f(x,\mu)=&A\left(\int_{\mathbb{R}^d}\kappa_1(x,y)\mu({\rm d}y),\cdots, \int_{\mathbb{R}^d}\kappa_q(x,y)\mu({\rm d}y) \right),\label{coefficient-2.1}\\
g(x,\mu)=&B\left(\int_{\mathbb{R}^d}\zeta_1(x,y)\mu({\rm d}y),\cdots, \int_{\mathbb{R}^d}\zeta_q(x,y)\mu({\rm d}y)\right)\label{coefficient-2.2}
\end{align}
where $q$ is a positive integer, $\kappa_j:\mathbb{R}^d\times\mathbb{R}^d\to \mathbb{R}^d, \zeta_j:\mathbb{R}^d\times\mathbb{R}^d\to \mathbb{R}^{d\times m_0}, j=1,\dots, q$, $A:\mathbb{R}^d\times\cdots\times\mathbb{R}^d\to\mathbb{R}^d$, $B:\mathbb{R}^{d\times m_0}\times\cdots\times\mathbb{R}^{d\times m_0}\to\mathbb{R}^{d\times m_0}$. 
\item[(iii)] higher-order interaction case:
\begin{align}
f(x,\mu)=&A\left(\int_{\mathbb{R}^d\times\cdots\times \mathbb{R}^d}\kappa(x,y_1,\dots, y_q)\mu({\rm d}y_1)\cdots\mu({\rm d}y_q) \right),\label{coefficient-3.1}\\
g(x,\mu)=&B\left(\int_{\mathbb{R}^d\times\cdots\times \mathbb{R}^d}\zeta(x,y_1,\dots, y_q) \mu({\rm d}y_1)\cdots\mu({\rm d}y_q) \right),\label{coefficient-3.2}
\end{align}
where $q$ is a positive integer, $\kappa:\mathbb{R}^d\times\cdots\times \mathbb{R}^d\to \mathbb{R}^d$, $\zeta:\mathbb{R}^d\times\cdots\times \mathbb{R}^d\to \mathbb{R}^{d\times m_0},$ $A:\mathbb{R}^d\to \mathbb{R}^d, B:\mathbb{R}^{d\times m_0}\to \mathbb{R}^{d\times m_0}$. 
\end{itemize}

Below, we make a comment on the three different forms of the coefficients.
\begin{remark}
Note that \eqref{coefficient-1} does not include the cases when several statistical quantities are involved in the coefficients, for example, if the coefficient depends on the variance, then both the first and second moments will be involved. These cases could be given by \eqref{coefficient-2.1} and \eqref{coefficient-2.2} by introducing multiple kernel functions. On the other hand, if the system depends on higher-order statistics, such as covariance, it is necessary to be described using equations with higher-order interactions \eqref{coefficient-3.1} and \eqref{coefficient-3.2}.

\end{remark}

The main aim in section \ref{sec3} is to establish the following error estimate:
\begin{align}\label{main aim}
\max_{1\le i\le N}\mathbb{E}\vert x_t^i-X_t^{i,N,\delta}\vert^p\le& C\left\{N^{-\frac{p}{2}}+(t-\delta_t)^{\frac{p}{2}}+\max_{1\le i\le N}\mathbb{E}\vert a(X_{t}^{i,N,\delta})-\tilde{a}(X_{t}^{i,N,\delta})\vert^p\right\}
\end{align}
for $p\ge 2$. Throughout this paper, let $C$ denote a generic positive constant that may change from line to line, and may depend on the initial data $x_0$, the terminal time $T$, and other fixed data (such as $p$ and the Lipschitz constants introduced below), but is always independent of the dimension $d$, the number of particles $N$ and the constant $\delta$. 

Next, let us explain why we focus on the systems \eqref{NIPS} and \eqref{IPS-2}, and consider the estimate \eqref{main aim}. This is primarily based on the following facts:
\begin{itemize}
\item In \eqref{IPS-2}, once we take $\tilde{a}=a, \delta=t$, then $\delta_t=t$ and \eqref{IPS-2} becomes
\begin{align}\label{IPS-3-1}
{\rm d}X_t^{i,N}=& a(X_{t}^{i,N}){\rm d}t+f(X_{t}^{i,N},\hat\mu_{t}^{X,N}){\rm d}t+g(X_{t}^{i,N},\hat\mu_{t}^{X,N}){\rm d}W^i_t,~X_0^{i,N}=x_0^i,
\end{align}
where we set $X_t^{i,N}:=X_t^{i,N,t},\hat\mu_{t}^{X,N}:=\hat\mu_{t}^{X,N,t}$ for the notational brevity. It is easy to see that \eqref{NIPS} and \eqref{IPS-3-1} are the non-interacting particle system and the interacting particle system corresponding to the MV-SDEs
\begin{align*}
{\rm d}x_t=& a(x_t){\rm d}t+f(x_t,\mu_t^{x}){\rm d}t+ g(x_t,\mu_t^{x}){\rm d}W_t,~t>0,
\end{align*}
with the initial data $x_0$, respectively, and as an immediate by-product of \eqref{main aim}, the PoC result 
\begin{align*}
\max_{1\le i\le N} \left\{\left(\mathbb{E}\vert x_t^i-X_t^{i,N}\vert^p\right)^{1/p}+\mathbb{E}\mathbb{W}_p(\mu_t^{x^i},\hat\mu_{t}^{X,N})\right\}\le CN^{-1/2}
\end{align*}
will be established.
\item If the coefficients of equation \eqref{NIPS} are all linearly growing, it is well known that the EM scheme is the simplest method to simulate the MV-SDEs (\cite{YunLi2023}): for a step size $\Delta>0$, $\underline{t}:=[t/\Delta]\Delta$, 
\begin{align*}
{\rm d}X_t^{i,N,\Delta}=& a(X_{\underline{t}}^{i,N,\Delta}){\rm d}t+f(X_{\underline{t}}^{i,N,\Delta},\hat\mu_{\underline{t}}^{X,N,\Delta}){\rm d}t+g(X_{\underline{t}}^{i,N,\Delta},\hat\mu_{\underline{t}}^{X,N,\Delta}){\rm d}W^i_t.
\end{align*}
Obviously, the scheme is included in \eqref{IPS-2} by taking $\tilde{a}=a$ and $\delta=\Delta$. However, this routine does not work for the MV-SDEs with non-globally Lipschitz continuous coefficients, and we must use the variants of the EM method. Inspired by \cite{Hutzenthaler2012, MaoXuerong2015, MaoXuerong2016}, tamed EM and truncated EM methods have been well developed for MV-SDEs (\cite{G.dosReis2022, Guo2024ESAIM,Kumar2022}). Suppose that $f$ and $g$ are Lipschitz continuous, and $a$ is one-sided Lipschitz continuous, if we set $\delta=\Delta$, take
\begin{align*}
\tilde{a}(x)=\frac{a(x)}{1+\Delta^{\gamma}\vert a(x)\vert}
\end{align*}
for $\gamma\in(0,1/2]$, or 
\begin{align*}
\tilde{a}(x)=a\left(\Big(\vert x\vert \wedge\varphi^{-1}(h(\Delta))\Big)\frac{x}{\vert x\vert}\right),
\end{align*}
where the precise definitions of the functions $\varphi$ and $h$ can be found in \cite[Section 3]{Guo2024ESAIM}, then \eqref{IPS-2} becomes the tamed EM and truncated EM schemes for system \eqref{NIPS}, respectively, and \eqref{main aim} gives the convergence result of the numerical scheme, as long as we further give the error estimate between $a$ and $\tilde{a}$.
\end{itemize}
Accordingly, we can simultaneously determine the convergence rate of both the propagation of chaos and the time discretization method, provided that \eqref{main aim} holds, rather than analyzing them separately.

\section{Main results}\label{sec3}
\subsection{Error estimate under the nonlinear distribution-dependence case}\label{subsec2.3}
The main objective of this paper is to establish the optimal convergence rate of the numerical method, particularly for the propagation of chaos, when the coefficients nonlinearly depend on the measure components. First, we consider the  equation \eqref{NIPS} with coefficients taking the form of \eqref{coefficient-1}. Before starting our work, the first natural step is to ensure that equations \eqref{NIPS} and \eqref{IPS-2} have unique solutions in this setting. We establish this under the following conditions.

\begin{assumption}\label{A2.1}
There are constants $L_1>0$ and $r\ge 1$ such that 
\begin{align*}
\langle x-y, a(x)-a(y)\rangle\le& L_1\vert x-y\vert^2,\\
\vert a(x)-a(y)\vert \le& L_1(1+\vert x\vert^r+\vert y\vert^r)\vert x-y\vert,\\
\vert a(0)\vert \le& L_1
\end{align*}
for all $x, y\in\mathbb{R}^d$.
\end{assumption}

\begin{assumption}\label{A2.2}
There are constants $L_2>0$ such that 
\begin{align*}
\vert A(x)-A(y)\vert\vee \vert B(x)-B(y)\vert\le& L_2\vert x-y\vert,\\
\vert\kappa(x,y)-\kappa(\bar{x},\bar{y})\vert\vee \vert \zeta(x,y)-\zeta(\bar{x},\bar{y})\vert\le& L_2(\vert x-\bar{x}\vert+\vert y-\bar{y}\vert),\\
\vert A(0)\vert \vee\vert \kappa(0,0)\vert\vee \vert B(0)\vert\vee\vert \zeta(0,0)\vert\le& L_2
\end{align*}
for all $x, y,\bar{x},\bar{y}\in\mathbb{R}^d$.
\end{assumption}

\begin{assumption}\label{A2.3}
Let $p\ge 2$, suppose that there exists a unique solution to \eqref{IPS-2} for any $i\in\{1,2,\dots, N\}$, and
\begin{align*}
\sup_{1\le i\le N}\left\{\mathbb{E}\left(\sup_{0\le t\le T}\vert X_t^{i,N,\delta}\vert^p\right)\vee
\mathbb{E}\left(\sup_{0\le t\le T}\vert \tilde{a}(X_t^{i,N,\delta})\vert^p\right)\right\}\le&  C,
\end{align*}
where $C$ is independent of $d$, $N$ and $\delta$.
\end{assumption}
Assumption \ref{A2.3} will be verified in the later when we take $\tilde{a}$ and $\delta$ as specific expressions.

\begin{remark} \label{remark1}\rm
It is well known that Assumptions \ref{A2.1} and \ref{A2.2} imply the following growth conditions of the coefficients:
\begin{align*}
\langle x, a(x)\rangle=&\langle x-0, a(x)-a(0)\rangle+\langle x,a(0)\rangle\le L_1\vert x\vert^2+\frac{\vert x\vert^2}{2}+\frac{\vert a(0)\vert^2}{2}\le  (L_1+\frac{1}{2})\vert x\vert^2+\frac{L_1^2}{2},\\
\vert a(x)\vert \le& \vert a(x)-a(0)\vert+ \vert a(0)\vert\le L_1(1+\vert x\vert+\vert x\vert^{r+1}),
\end{align*}
and similarly,
\begin{align*}
\vert A(x)\vert\vee\vert B(x)\vert\le&  L_2(1+\vert x\vert),~\vert \zeta(x,y)\vert\vee\vert \kappa(x,y)\vert\le L_2(1+\vert x\vert+\vert y\vert),
\end{align*}
\end{remark}

\begin{remark} \label{f to F}\rm
According to Assumption \ref{A2.2}, using the Kantorovich-Rubinstein dual representation of the Wasserstein distance $\mathbb{W}_1$:
\begin{align*}
\mathbb{W}_1(\mu,\sigma)=\sup\left\{\int_{\mathbb{R}^d} \phi(x) \mu({\rm d}x)-\int_{\mathbb{R}^d} \phi(x) \sigma({\rm d}x)\Bigg\vert \phi {\text ~Lipschitz~ continuous~with}~ \sup_{x\neq y}\frac{\vert \phi(x)-\phi(y)\vert }{\vert x-y\vert}\le 1\right\}
\end{align*}
and the fact that $\mathbb{W}_1(\mu,\sigma)\le \mathbb{W}_2(\mu,\sigma)$, it is easy to verify that the following inequalities subsequently hold
\begin{align*}
\vert f(x,\mu)-f(y,\sigma)\vert\le&C\big(\vert x-y\vert+\mathbb{W}_2(\mu,\sigma)\big),\\
\vert g(x,\mu)-g(y,\sigma)\vert\le&C\big(\vert x-y\vert+\mathbb{W}_2(\mu,\sigma)\big),\\
\vert f(0,\delta_0)\vert+\vert g(0,\delta_0)\vert\le& C
\end{align*}
for all $x,y\in\mathbb{R}^d$ and all $\mu,\sigma\in\mathcal{P}_2(\mathbb{R}^d)$, where $\delta_0$ denotes the delta distribution centered at the point 0. 
\end{remark}

 Based on Remark \ref{f to F}, we can obtain the existence, uniqueness and boundedness of the solution of equation \eqref{NIPS}  following Theorem 3.3 in \cite{G.dosReis2019}.
\begin{lemma}\label{x_t_bounded}
Let $p\ge 2$, for any $i=1,2,\dots,N$, under Assumptions \ref{A2.1} and \ref{A2.2}, there exists a unique solution $x_t^i$ to the equation \eqref{NIPS}, and the solution satisfies
\begin{align*}
\mathbb{E}\left(\sup_{0\le t\le T}\vert x_t^i\vert^p\right)\le C.
\end{align*} 
\end{lemma}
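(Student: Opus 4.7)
The plan is to reduce the well-posedness part to a cited result and then close an $L^p$ estimate via It\^o's formula on a Gronwall loop. For existence and uniqueness, the authors explicitly appeal to Theorem 3.3 of \cite{G.dosReis2019}, so the first task is to verify its hypotheses for the combined drift $a(x)+f(x,\mu)$ and diffusion $g(x,\mu)$: the one-sided Lipschitz and polynomial Lipschitz estimates on $a$ come from Assumption \ref{A2.2}, while the joint Lipschitz continuity of $f$ and $g$ in $(x,\mu)$ with respect to the Euclidean and $\mathbb{W}_2$ metrics, together with linear growth, is exactly the content of Remark \ref{f to F}. These are precisely the conditions of the cited theorem, giving a pathwise-unique strong solution for each $i$.

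For the moment bound, I would apply It\^o's formula to $|x_t^i|^p$ and localise by $\tau_n:=\inf\{t\ge 0:|x_t^i|\ge n\}$ so that the stochastic integral is a true martingale. The drift term $p|x_s^i|^{p-2}\langle x_s^i,\,a(x_s^i)+f(x_s^i,\mu_s^{x^i})\rangle$ is controlled using the monotonicity bound $\langle x,a(x)\rangle\le(\lambda_1+\tfrac12)|x|^2+\tfrac{L^2}{2}$ from Remark \ref{remark1}, together with the linear-in-measure growth $|f(x,\mu)|\le C(1+|x|+\mathbb{W}_2(\mu,\delta_0))$ coming from Remark \ref{f to F}; the It\^o correction involving $|g(x_s^i,\mu_s^{x^i})|^2$ is handled analogously using the linear growth of $g$.

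The crucial observation that lets the Gronwall loop close is that $\mathbb{W}_2(\mu_s^{x^i},\delta_0)^2\le\mathbb{E}|x_s^i|^2$, and for $p\ge 2$ Jensen's inequality gives $(\mathbb{E}|x_s^i|^2)^{p/2}\le\mathbb{E}|x_s^i|^p\le\mathbb{E}\sup_{u\le s}|x_u^i|^p$. Thus every measure-dependent term on the right-hand side is dominated by the same pathwise-supremum quantity that drives the estimate. After a BDG estimate on the martingale term and a Young's-inequality step that absorbs a fraction of $\mathbb{E}\sup_{s\le t\wedge\tau_n}|x_s^i|^p$ into the left-hand side, Gronwall's lemma produces a bound uniform in $n$, and Fatou's lemma then gives the desired estimate on $[0,T]$. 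The only subtle point is the coupling between the pathwise supremum on the left and the measure-integral on the right; since the measure here is the law of the very particle being bounded, no exchangeability or symmetry argument is required, and the loop closes directly.
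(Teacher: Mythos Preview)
Your proposal is correct. The paper itself does not give a detailed argument for this lemma: it simply notes that, once Remark~\ref{f to F} places the coefficients $a+f$ and $g$ in the one-sided/globally Lipschitz framework, both well-posedness \emph{and} the $p$-th moment bound follow directly from Theorem~3.3 of \cite{G.dosReis2019}. You follow exactly the same route for existence and uniqueness, but then supply an explicit It\^o--BDG--Gronwall derivation of the moment estimate rather than absorbing it into the citation. That derivation is standard and sound (the key closure step $\mathbb{W}_2(\mu_s^{x^i},\delta_0)^p\le\mathbb{E}|x_s^i|^p$ is exactly what makes the self-consistent loop work), so the only difference is that you are more explicit than the paper, not that you take a different route.
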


\begin{theorem}\label{theorem_x^i-x^i,K,N}
Let $p\ge 2$, it holds under Assumptions \ref{A2.1}-\ref{A2.3} that
\begin{align*}
\max_{1\le i\le N}\mathbb{E}\left(\sup_{0\le t\le T}\vert x_t^i-X_t^{i,N,\delta}\vert^p\right)\le& C\left\{N^{-\frac{p}{2}}+\sup_{0\le t\le T}(t-\delta_t)^{\frac{p}{2}}+\max_{1\le i\le N}\mathbb{E}\left(\sup_{0\le t\le T}\vert a(X_{t}^{i,N,\delta})-\tilde{a}(X_{t}^{i,N,\delta})\vert^p\right)\right\}.
\end{align*}
\end{theorem}
Before giving the proof of this theorem, let us introduce some definitions for simplicity and give some lemmas. Define
\begin{align*}
\hat{\kappa}(x_t^i):=\int_{\mathbb{R}^d}\kappa(x_t^i,y)\mu_t^{x^i}({\rm d}y), \quad\hat{\zeta}(x_t^i):=\int_{\mathbb{R}^d}\zeta(x_t^i,y)\mu_t^{x^i}({\rm d}y)
\end{align*}
for notation brevity, then equation \eqref{NIPS} with coefficients given by \eqref{coefficient-1} equals
\begin{align}\label{NIPS-2.3}
x_t^i=x_0^i+ \int_0^t a(x_s^i){\rm d}s+ \int_0^t A\left(\hat{\kappa}(x_s^i)\right){\rm d}s+\int_0^t B\left(\hat{\zeta}(x_s^i)\right){\rm d}W_s^i.
\end{align}
For $i=1,2,\dots, N$ and $x^i\in\mathbb{R}^d$, define
\begin{align*}
\hat{\kappa}^{N}(x^i):=\frac{1}{N}\sum_{j=1}^N\kappa(x^i,x^j),\quad \hat{\zeta}^{N}(x^i):=\frac{1}{N}\sum_{j=1}^N\zeta(x^i,x^j),
\end{align*}
then \eqref{IPS-2} with coefficients given by \eqref{coefficient-1} can be rewritten as
 \begin{align}\label{PPS-2}
X_t^{i,N,\delta}=x_0^i+ \int_0^t \tilde{a}(X_{\delta_s}^{i,N,\delta}){\rm d}s+\int_0^t A\left(\hat{\kappa}^{N}(X_{\delta_s}^{i,N,\delta})\right){\rm d}s+\int_0^t B\left(\hat{\zeta}^{N}(X_{\delta_s}^{i,N,\delta})\right){\rm d}W_s^i.
\end{align}

\begin{lemma}[Conditional Rosenthal-type inequality (Theorem 2.2 in \cite{DemeiYuan2015})]\label{conditional Rosenthal-type inequality}
Let $\mathcal{G}$ be a sub-$\sigma$-algebra of $\mathcal{F}$, and let $A_1, A_2,\dots, A_n$ be $\mathcal{G}$-independent random variables with $\mathbb{E}[A_i\vert\mathcal{G}]=0$ a.s. for all $i$. For $p>2$, there exists a constant $C_p$ depending only on $p$ such that
\begin{align*}
\mathbb{E}\left[\left\vert\sum_{i=1}^n A_i\right\vert^p\Big\vert\mathcal{G}\right]\le C_p\max\left\{\sum_{i=1}^n\mathbb{E}\Big[\left\vert A_i\right\vert^p\big\vert\mathcal{G}\Big],\left(\sum_{i=1}^n\mathbb{E}\Big[\left\vert A_i\right\vert^2\big\vert\mathcal{G}\Big] \right)^{\frac{p}{2}}\right\}.
\end{align*}
\end{lemma}

\begin{lemma}\label{kappa-kappa^N}
Let $p\ge2$, for any $t\in[0,T]$, it holds under Assumptions \ref{A2.1} and \ref{A2.2} that
\begin{align*}
\mathbb{E}\left\vert \hat{\kappa}(x_t^i)-\hat{\kappa}^{N}(x_t^i)\right\vert^p\vee\mathbb{E}\left\vert \hat{\zeta}(x_t^i)-\hat{\zeta}^{N}(x_t^i)\right\vert^p\le CN^{-\frac{p}{2}}.
\end{align*}
\end{lemma}
\begin{proof}
According to Jensen's inequality, we have 
\begin{align*}
&\mathbb{E}\vert \hat{\kappa}(x_t^i)-\hat{\kappa}^{N}(x_t^i)\vert^p=\mathbb{E}\left\vert \frac{1}{N}\sum_{j=1}^N\int_{\mathbb{R}^d}\left(\kappa(x_t^i,y)-\kappa(x_t^i,x_t^j)\right)\mu_t^{x^i}({\rm d}y)\right\vert^p\notag\\
\le&\frac{2^{p-1}}{N^{p}}\left(\mathbb{E}\left\vert \int_{\mathbb{R}^d}\left(\kappa(x_t^i,y)-\kappa(x_t^i,x_t^i)\right)\mu_t^{x^i}({\rm d}y)\right\vert^p+\mathbb{E}\left\vert\sum_{j\neq i}\int_{\mathbb{R}^d}\left(\kappa(x_t^i,y)-\kappa(x_t^i,x_t^j)\right)\mu_t^{x^i}({\rm d}y)\right\vert^p\right)
\end{align*}
For $p>2$, using the tower property of the expectation, Lemma \ref{conditional Rosenthal-type inequality}, and the H\"older inequality for the conditional expectation, we can deduce that
\begin{align*}
&\mathbb{E}\left\vert\sum_{j\neq i}\int_{\mathbb{R}^d}\left(\kappa(x_t^i,y)-\kappa(x_t^i,x_t^j)\right)\mu_t^{x^i}({\rm d}y)\right\vert^p\notag\\
=&\mathbb{E}\left\{\mathbb{E}\left[\left\vert\sum_{j\neq i}\int_{\mathbb{R}^d}\left(\kappa(x_t^i,y)-\kappa(x_t^i,x_t^j)\right)\mu_t^{x^i}({\rm d}y)\right\vert^p\Big\vert \sigma(x_t^i)\right]\right\}\notag\\
\le&\mathbb{E}\left\{\sum_{j\neq i}\mathbb{E}\left[\left\vert\int_{\mathbb{R}^d}\left(\kappa(x_t^i,y)-\kappa(x_t^i,x_t^j)\right)\mu_t^{x^i}({\rm d}y)\right\vert^p\Big\vert \sigma(x_t^i)\right]\right\}\notag\\
&+\mathbb{E}\left\{\left(\sum_{j\neq i}\mathbb{E}\left[\left\vert\int_{\mathbb{R}^d}\left(\kappa(x_t^i,y)-\kappa(x_t^i,x_t^j)\right)\mu_t^{x^i}({\rm d}y)\right\vert^2\Big\vert \sigma(x_t^i)\right]\right)^{\frac{p}{2}}\right\}\notag\\
\le&\sum_{j\neq i}\mathbb{E}\left\vert\int_{\mathbb{R}^d}\left(\kappa(x_t^i,y)-\kappa(x_t^i,x_t^j)\right)\mu_t^{x^i}({\rm d}y)\right\vert^p+(N-1)^{\frac{p}{2}-1}\sum_{j\neq i}\mathbb{E}\left\vert\int_{\mathbb{R}^d}\left(\kappa(x_t^i,y)-\kappa(x_t^i,x_t^j)\right)\mu_t^{x^i}({\rm d}y)\right\vert^p,
\end{align*}
in which $\sigma(x_t^i)$ denotes the $\sigma$-algebra generated by $x_t^i$. Then, applying the H\"older inequality, Assumption \ref{A2.2} and Lemma \ref{x_t_bounded}, yields that
\begin{align}\label{F_1-F_1^N}
&\mathbb{E}\vert \hat{\kappa}(x_t^i)-\hat{\kappa}^{N}(x_t^i)\vert^p\notag\\
\le&CN^{-p}\mathbb{E}\left\vert \int_{\mathbb{R}^d}\left(\kappa(x_t^i,y)-\kappa(x_t^i,x_t^i)\right)\mu_t^{x^i}({\rm d}y)\right\vert^p+CN^{-p}\sum_{j\neq i}\mathbb{E}\left\vert\int_{\mathbb{R}^d}\left(\kappa(x_t^i,y)-\kappa(x_t^i,x_t^j)\right)\mu_t^{x^i}({\rm d}y)\right\vert^p\notag\\
&+CN^{-\frac{p}{2}-1}\sum_{j\neq i}\mathbb{E}\left\vert\int_{\mathbb{R}^d}\left(\kappa(x_t^i,y)-\kappa(x_t^i,x_t^j)\right)\mu_t^{x^i}({\rm d}y)\right\vert^p\notag\\
\le&C(N^{-p}+N^{-\frac{p}{2}-1})\sum_{j=1}^N\mathbb{E}\left\vert\int_{\mathbb{R}^d}\left(\kappa(x_t^i,y)-\kappa(x_t^i,x_t^j)\right)\mu_t^{x^i}({\rm d}y)\right\vert^p\notag\\
\le&CN^{-\frac{p}{2}}(N^{-\frac{p}{2}}+N^{-1})\sum_{j=1}^N\mathbb{E}\int_{\mathbb{R}^d}\left\vert\kappa(x_t^i,y)-\kappa(x_t^i,x_t^j)\right\vert^p\mu_t^{x^i}({\rm d}y)\notag\\
\le&CN^{-\frac{p}{2}}(N^{-\frac{p}{2}}+N^{-1})\sum_{j=1}^N\mathbb{E}\int_{\mathbb{R}^d}\vert y-x_t^j\vert^p\mu_t^{x^i}({\rm d}y)\notag\\
\le&CN^{-\frac{p}{2}}(N^{-\frac{p}{2}}+N^{-1})\sum_{j=1}^N\mathbb{E}\left(\mathbb{E}\vert x_t^i\vert^p+\vert x_t^j\vert^p\right)\notag\\
\le&CN^{-\frac{p}{2}}(N^{1-\frac{p}{2}}+1)\notag\\
\le&CN^{-\frac{p}{2}}.
\end{align}
Moreover, following the H\"older inequality once again, one has
\begin{align*}
\mathbb{E}\vert \hat{\kappa}(x_t^i)-\hat{\kappa}^{N}(x_t^i)\vert^2\le \left(\mathbb{E}\vert \hat{\kappa}(x_t^i)-\hat{\kappa}^{N}(x_t^i)\vert^p\right)^{\frac{2}{p}}\le CN^{-1}.
\end{align*}
Hence
\begin{align*}
\mathbb{E}\vert \hat{\kappa}(x_t^i)-\hat{\kappa}^{N}(x_t^i)\vert^p\le CN^{-\frac{p}{2}}
\end{align*}
holds for all $p\ge 2$. Analogously we can also get
\begin{align*}
\mathbb{E}\left\vert \hat{\zeta}(x_t^i)-\hat{\zeta}^{N}(x_t^i)\right\vert^p\le CN^{-\frac{p}{2}}
\end{align*}
for $p\ge 2$ from Assumption \ref{A2.2}.
\end{proof}


\begin{lemma}\label{X_t-X_delta_t}
Let $p\ge 2$, for any $t\in[0,T]$, it holds under Assumptions \ref{A2.1}, \ref{A2.2} and \ref{A2.3} that
\begin{align*}
\mathbb{E} \vert X_t^{i,N,\delta}-X_{\delta_t}^{i,N,\delta}\vert^p\le C(t-\delta_t)^{\frac{p}{2}}.
\end{align*}
\end{lemma}
\begin{proof}
Since $\delta_t=[t/\delta]\delta, \delta>0$, it is easy to know that $0\le \delta_t\le t$, then
 \begin{align*}
X_t^{i,N,\delta}=&X_{\delta_t}^{i,N,\delta}+\int_{\delta_t}^t \tilde{a}(X_{\delta_s}^{i,N,\delta}){\rm d}s+\int_{\delta_t}^tA\left(\hat{\kappa}^{N}(X_{\delta_s}^{i,N,\delta})\right){\rm d}s+\int_{\delta_t}^tB\left(\hat{\zeta}^{N}(X_{\delta_s}^{i,N,\delta})\right){\rm d}W_s^i.
\end{align*}
On the other hand, for $s\in[\delta_t,t]$, since 
 \begin{align*}
[\delta_t/\delta]\delta\le [s/\delta]\delta\le [t/\delta]\delta, 
\end{align*}
and 
 \begin{align*}
[\delta_t/\delta]\delta=\left[\frac{[t/\delta]\delta}{\delta}\right]\delta=[t/\delta]\delta,
\end{align*}
we can get that $\delta_s=\delta_t$. Hence, using Jensen's inequality, the B-D-G inequality, and Remark \ref{remark1}, we have
 \begin{align*}
&\mathbb{E}\vert X_t^{i,N,\delta}-X_{\delta_t}^{i,N,\delta}\vert^p\\
= &\mathbb{E}\left\vert \int_{\delta_t}^t \tilde{a}(X_{\delta_s}^{i,N,\delta}){\rm d}s+\int_{\delta_t}^tA\left(\hat{\kappa}^{N}(X_{\delta_s}^{i,N,\delta})\right){\rm d}s+\int_{\delta_t}^tB\left(\hat{\zeta}^{N}(X_{\delta_s}^{i,N,\delta})\right){\rm d}W_s^i\right\vert^p\\
\le &C(t-\delta_t)^p\mathbb{E}\left\vert \tilde{a}(X_{\delta_t}^{i,N,\delta})\right\vert^p+C(t-\delta_t)^p\mathbb{E}\left\vert A\left(\hat{\kappa}^{N}(X_{\delta_t}^{i,N,\delta})\right)\right\vert^p+C(t-\delta_t)^{\frac{p}{2}}\mathbb{E}\left\vert B\left(\hat{\zeta}^{N}(X_{\delta_t}^{i,N,\delta})\right)\right\vert^p\\
\le &C(t-\delta_t)^p\mathbb{E}\left\vert \tilde{a}(X_{\delta_t}^{i,N,\delta})\right\vert^p+C(t-\delta_t)^p\left(1+\mathbb{E}\left\vert \hat{\kappa}^{N}(X_{\delta_t}^{i,N,\delta})\right\vert^p\right)+C(t-\delta_t)^{\frac{p}{2}}\left(1+\mathbb{E}\left\vert \hat{\zeta}^{N}(X_{\delta_s}^{i,N,\delta})\right\vert^p\right)\\
\le &C(t-\delta_t)^p\mathbb{E}\left\vert \tilde{a}(X_{\delta_t}^{i,N,\delta})\right\vert^p+C(t-\delta_t)^p\left(1+\frac{1}{N}\sum_{j=1}^N\mathbb{E}\left\vert\kappa(X_{\delta_s}^{i,N,\delta},X_{\delta_s}^{j,N,\delta})\right\vert^p\right) \\
&+C(t-\delta_t)^{\frac{p}{2}}\left(1+\frac{1}{N}\sum_{j=1}^N\mathbb{E}\left\vert\zeta(X_{\delta_s}^{i,N,\delta},X_{\delta_s}^{j,N,\delta})\right\vert^p\right) \\
\le &C(t-\delta_t)^p\mathbb{E}\left\vert \tilde{a}(X_{\delta_t}^{i,N,\delta})\right\vert^p+C\left((t-\delta_t)^p+(t-\delta_t)^{\frac{p}{2}}\right)\left(1+\mathbb{E}\vert X_{\delta_t}^{i,N,\delta}\vert^p+\frac{1}{N}\sum_{j=1}^N\mathbb{E}\vert X_{\delta_t}^{j,N,\delta}\vert^p\right)\\
\le &C(t-\delta_t)^{\frac{p}{2}}\left((t-\delta_t)^{\frac{p}{2}}\mathbb{E}\left\vert \tilde{a}(X_{\delta_t}^{i,N,\delta})\right\vert^p+\left(1+(t-\delta_t)^{\frac{p}{2}}\right)\left(1+\mathbb{E}\vert X_{\delta_t}^{i,N,\delta}\vert^p\right)\right).
\end{align*}
Since $t-\delta_t\le t\le T$, applying Assumption \ref{A2.3}, it holds that
 \begin{align*}
\mathbb{E}\vert X_t^{i,N,\delta}-X_{\delta_t}^{i,N,\delta}\vert^p\le C(t-\delta_t)^{\frac{p}{2}}.
\end{align*}
\end{proof}

\begin{proof}[Proof of Theorem \ref{theorem_x^i-x^i,K,N}]
For any $i=1,2,\dots,N$, according to \eqref{NIPS-2.3} and \eqref{PPS-2}, we have
\begin{align*}
e_t^{i,N}:=x_t^i-X_t^{i,N,\delta}&=\int_0^t\left(a(x_s^i)-\tilde{a}(X_{\delta_s}^{i,N,\delta})\right){\rm d}s+\int_0^t\left( A\left(\hat{\kappa}(x_s^i)\right)-A\left(\hat{\kappa}^{N}(X_{\delta_s}^{i,N,\delta})\right)\right){\rm d}s\notag\\
&\quad+\int_0^t \left(B\left(\hat{\zeta}(x_s^i)\right)-B\left(\hat{\zeta}^{N}(X_{\delta_s}^{i,N,\delta})\right)\right){\rm d}W_s^i.
\end{align*}
For $p>2$, applying the It\^o formula, the Cauchy-Schwarz inequality, and Assumption \ref{A2.2}, one has
\begin{align}\label{e^p}
\vert e_t^{i,N}\vert^p\le & p\int_0^t \vert e_s^{i,N}\vert^{p-2}\left\langle e_s^{i,N},a(x_s^i)-\tilde{a}(X_{\delta_s}^{i,N,\delta})\right\rangle{\rm d}s+ pL_2\int_0^t \vert e_s^{i,N}\vert^{p-1}\left\vert \hat{\kappa}(x_s^i)-\hat{\kappa}^{N}(X_{\delta_s}^{i,N,\delta})\right\vert{\rm d}s\notag\\
&+\frac{p(p-1)L_2^2}{2}\int_0^t \vert e_s^{i,N}\vert^{p-2}\left\vert \hat{\zeta}(x_s^i)-\hat{\zeta}^{N}(X_{\delta_s}^{i,N,\delta})\right\vert^2{\rm d}s\notag\\
&+p\int_0^t\vert e_s^{i,N}\vert^{p-2}\left\langle e_s^{i,N},B\left(\hat{\zeta}(x_s^i)\right)-B\left(\hat{\zeta}^{N}(X_{\delta_s}^{i,N,\delta})\right)\right\rangle{\rm d}W_s^i\notag\\
=:& J_1+J_2+J_3+J_4.
\end{align}
Following Assumption \ref{A2.1} and the Young inequality, it is easy to see that
\begin{align}\label{J_1}
J_1\le& p\int_0^t \vert e_s^{i,N}\vert^{p-2}\left\langle e_s^{i,N},a(x_s^i)-a(X_s^{i,N,\delta})\right\rangle{\rm d}s+p\int_0^t \vert e_s^{i,N}\vert^{p-1}\vert a(X_s^{i,N,\delta})-a(X_{\delta_s}^{i,N,\delta})\vert{\rm d}s\notag\\
&+p\int_0^t \vert e_s^{i,N}\vert^{p-1}\vert a(X_{\delta_s}^{i,N,\delta})-\tilde{a}(X_{\delta_s}^{i,N,\delta})\vert{\rm d}s\notag\\
\le& C\int_0^t \vert e_s^{i,N}\vert^p{\rm d}s+C\int_0^t (1+\vert X_s^{i,N,\delta}\vert^r+\vert X_{\delta_s}^{i,N,\delta}\vert^r)^p\vert X_s^{i,N,\delta}-X_{\delta_s}^{i,N,\delta}\vert^p{\rm d}s\notag\\
&+\int_0^t \vert a(X_{\delta_s}^{i,N,\delta})-\tilde{a}(X_{\delta_s}^{i,N,\delta})\vert^p{\rm d}s.
\end{align}
Similarly, we can also deduce from the Young inequality and Assumption \ref{A2.1} that
\begin{align}\label{J_2}
J_2\le&pL_2\int_0^t \vert e_s^{i,N}\vert^{p-1}\left(\vert\hat{\kappa}(x_s^i)-\hat{\kappa}^{N}(x_s^i)\vert+\vert\hat{\kappa}^{N}(x_s^i)-\hat{\kappa}^{N}(X_s^{i,N,\delta})\vert+\vert\hat{\kappa}^{N}(X_s^{i,N,\delta})-\hat{\kappa}^{N}(X_{\delta_s}^{i,N,\delta})\vert\right){\rm d}s\notag\\
\le&C\int_0^t\vert e_s^{i,N}\vert^p{\rm d}s+C\int_0^t\vert \hat{\kappa}(x_s^i)-\hat{\kappa}^{N}(x_s^i)\vert^p{\rm d}s+\frac{C}{N}\sum_{j=1}^N\int_0^t\left(\vert e_s^{i,N}\vert^p+\vert e_s^{j,N}\vert^p\right){\rm d}s\notag\\
&+\frac{C}{N}\sum_{j=1}^N\int_0^t\left(\vert X_s^{i,N,\delta}-X_{\delta_s}^{i,N,\delta}\vert^p+\vert X_s^{j,N,\delta}-X_{\delta_s}^{j,N,\delta}\vert^p\right){\rm d}s,
\end{align}
and
\begin{align}\label{J_3}
J_3\le&C\int_0^t \vert e_s^{i,N}\vert^p{\rm d}s+C\int_0^t \left\vert \hat{\zeta}(x_s^i)-\hat{\zeta}^{N}(x_s^i)\right\vert^p{\rm d}s+\frac{C}{N}\sum_{j=1}^N\int_0^t\left(\vert e_s^{i,N}\vert^p+\vert e_s^{j,N}\vert^p\right){\rm d}s\notag\\
&+\frac{C}{N}\sum_{j=1}^N\int_0^t\left(\vert X_s^{i,N,\delta}-X_{\delta_s}^{i,N,\delta}\vert^p+\vert X_s^{j,N,\delta}-X_{\delta_s}^{j,N,\delta}\vert^p\right){\rm d}s.
\end{align}
Substituting \eqref{J_1}-\eqref{J_3} into \eqref{e^p}, it gives that
\begin{align*}
\vert e_t^{i,N}\vert^p\le&C\int_0^t \vert e_s^{i,N}\vert^p{\rm d}s+C\int_0^t (1+\vert X_s^{i,N,\delta}\vert^r+\vert X_{\delta_s}^{i,N,\delta}\vert^r)^p\vert X_s^{i,N,\delta}-X_{\delta_s}^{i,N,\delta}\vert^p{\rm d}s\notag\\
&+\int_0^t \vert a(X_{\delta_s}^{i,N,\delta})-\tilde{a}(X_{\delta_s}^{i,N,\delta})\vert^p{\rm d}s+C\int_0^t\vert \hat{\kappa}(x_s^i)-\hat{\kappa}^{N}(x_s^i)\vert^p{\rm d}s+\frac{C}{N}\sum_{j=1}^N\int_0^t\vert e_s^{j,N}\vert^p{\rm d}s\notag\\
&+\frac{C}{N}\sum_{j=1}^N\int_0^t\left(\vert X_s^{i,N,\delta}-X_{\delta_s}^{i,N,\delta}\vert^p+\vert X_s^{j,N,\delta}-X_{\delta_s}^{j,N,\delta}\vert^p\right){\rm d}s+C\int_0^t \left\vert \hat{\zeta}(x_s^i)-\hat{\zeta}^{N}(x_s^i)\right\vert^p{\rm d}s
+J_4.
\end{align*}
Then for any $T_1\in[0,T]$, since the $p$th moments of $x_t^i$ and $X_t^{i,N,\delta}$ are all bounded, we have
\begin{align}\label{Ee^p}
\mathbb{E}\left(\sup_{0\le t\le T_1}\vert e_t^{i,N}\vert^p\right)\le &C\int_0^{T_1}\mathbb{E}\vert e_t^{i,N}\vert^p{\rm d}t+C\int_0^{T_1}\left(\mathbb{E} (1+\vert X_t^{i,N,\delta}\vert^r+\vert X_{\delta_t}^{i,N,\delta}\vert^r)^{2p}\mathbb{E}\vert X_t^{i,N,\delta}-X_{\delta_t}^{i,N,\delta}\vert^{2p}\right)^{1/2}{\rm d}t\notag\\
&+\int_0^{T_1}\mathbb{E} \vert a(X_{\delta_t}^{i,N,\delta})-\tilde{a}(X_{\delta_t}^{i,N,\delta})\vert^p{\rm d}t+C\int_0^{T_1}\mathbb{E} \vert \hat{\kappa}(x_t^i)-\hat{\kappa}^{N}(x_t^i)\vert^p{\rm d}t\notag\\
&+C\int_0^{T_1}\mathbb{E}\vert X_t^{i,N,\delta}-X_{\delta_t}^{i,N,\delta}\vert^p{\rm d}t+C\int_0^{T_1}\mathbb{E} \left\vert \hat{\zeta}(x_t^i)-\hat{\zeta}^{N}(x_t^i)\right\vert^p{\rm d}t\notag\\
&+C\mathbb{E}\sup_{0\le t\le T_1}\left(\int_0^t\vert e_s^{i,N}\vert^{p-2}\left\langle e_s^{i,N},B\left(\hat{\zeta}(x_s^i)\right)-B\left(\hat{\zeta}^{N}(X_{\delta_s}^{i,N,\delta})\right)\right\rangle{\rm d}W_s^i\right).
\end{align}
Using the B-D-G inequality, similar to \eqref{J_3}, it holds that
\begin{align}\label{J_4}
&\mathbb{E}\sup_{0\le t\le T_1}\left(\int_0^t\vert e_s^{i,N}\vert^{p-1}\left\vert\hat{\zeta}(x_s^i)-\hat{\zeta}^{N}(X_{\delta_s}^{i,N,\delta})\right\vert{\rm d}W_s^i\right)\notag\\
\le& C\mathbb{E}\left(\int_0^{T_1}\vert e_t^{i,N}\vert^{2p-2}\vert \hat{\zeta}(x_t^i)-\hat{\zeta}^{N}(X_{\delta_t}^{i,N,\delta})\vert^2{\rm d}t\right)^{1/2}\notag\\
\le&C\mathbb{E}\left(\sup_{0\le t\le T_1}\vert e_t^{i,N}\vert^{p}\int_0^{T_1}\vert e_t^{i,N}\vert^{p-2}\vert \hat{\zeta}(x_t^i)-\hat{\zeta}^{N}(X_{\delta_t}^{i,N,\delta})\vert^{2}{\rm d}t\right)^{1/2}\notag\\
\le&\frac{1}{2}\mathbb{E}\left(\sup_{0\le t\le T_1}\vert e_t^{i,N}\vert^p\right)+C\int_0^{T_1} \mathbb{E}\vert e_t^{i,N}\vert^p{\rm d}t+C\int_0^{T_1} \mathbb{E}\left\vert \hat{\zeta}(x_t^i)-\hat{\zeta}^{N}(x_t^i)\right\vert^p{\rm d}t+C\int_0^{T_1}\mathbb{E}\vert X_t^{i,N,\delta}-X_{\delta_t}^{i,N,\delta}\vert^p{\rm d}t.
\end{align}
Thus, combining \eqref{Ee^p} and \eqref{J_4}, one has
\begin{align}\label{Esupe^p}
\mathbb{E}\left(\sup_{0\le t\le T_1}\vert e_t^{i,N}\vert^p\right)\le &C\int_0^{T_1}\mathbb{E}\vert e_t^{i,N}\vert^p{\rm d}t+C\int_0^{T_1}\left(\mathbb{E} (1+\vert X_t^{i,N,\delta}\vert^r+\vert X_{\delta_t}^{i,N,\delta}\vert^r)^{2p}\mathbb{E}\vert X_t^{i,N,\delta}-X_{\delta_t}^{i,N,\delta}\vert^{2p}\right)^{1/2}{\rm d}t\notag\\
&+\int_0^{T_1}\mathbb{E} \vert a(X_{\delta_t}^{i,N,\delta})-\tilde{a}(X_{\delta_t}^{i,N,\delta})\vert^p{\rm d}t+C\int_0^{T_1}\mathbb{E} \vert \hat{\kappa}(x_t^i)-\hat{\kappa}^{N}(x_t^i)\vert^p{\rm d}t\notag\\
&+C\int_0^{T_1}\mathbb{E}\vert X_t^{i,N,\delta}-X_{\delta_t}^{i,N,\delta}\vert^p{\rm d}t+C\int_0^{T_1}\mathbb{E} \left\vert \hat{\zeta}(x_t^i)-\hat{\zeta}^{N}(x_t^i)\right\vert^p{\rm d}t.
\end{align}
Substituting Lemmas \ref{kappa-kappa^N} and \ref{X_t-X_delta_t} into \eqref{Esupe^p}, using Assumption \ref{A2.3} again, it gives
\begin{align*}
\mathbb{E}\left(\sup_{0\le t\le T_1}\vert e_t^{i,N}\vert^p\right)\le& C\int_0^{T_1}\mathbb{E}\vert e_t^{i,N}\vert^p{\rm d}t+C\int_0^{T_1}(t-\delta_t)^{\frac{p}{2}}\left(1+\left\{\mathbb{E}\left(\sup_{0\le s\le t}\vert X_s^{i,N,\delta}\vert^{2pr}\right)\right\}^{1/2}\right){\rm d}t\\
&+\int_0^{T_1}\mathbb{E} \vert a(X_{\delta_t}^{i,N,\delta})-\tilde{a}(X_{\delta_t}^{i,N,\delta})\vert^p{\rm d}t+CN^{-\frac{p}{2}}\\
\le &C\int_0^{T_1}\mathbb{E}\left(\sup_{0\le s\le t}\vert e_s^{i,N}\vert^p\right){\rm d}t+CN^{-\frac{p}{2}}+C\int_0^{T_1}(t-\delta_t)^{\frac{p}{2}}{\rm d}t\\
&+C\int_0^{T_1}\mathbb{E} \vert a(X_{\delta_t}^{i,N,\delta})-\tilde{a}(X_{\delta_t}^{i,N,\delta})\vert^p{\rm d}t.
\end{align*}
Therefore, it follows from the Gronwall inequality that
\begin{align*}
\mathbb{E}\left(\sup_{0\le t\le T_1}\vert e_t^{i,N}\vert^p\right)\le& CN^{-\frac{p}{2}}+C\sup_{0\le t\le T_1}(t-\delta_t)^{\frac{p}{2}}+C\mathbb{E}\left(\sup_{0\le t\le T_1}\vert a(X_{t}^{i,N,\delta})-\tilde{a}(X_{t}^{i,N,\delta})\vert^p\right).
\end{align*}
Then the assertion follows. For $p=2$, the result can be obtained in the same way.
\end{proof}

\subsection{Error estimate under the more general nonlinear dependence case}\label{subsec2.3}
In the following two subsections, we discuss more general cases.  In this subsection, consider equation \eqref{NIPS} with coefficients taking the form of \eqref{coefficient-2.1} and \eqref{coefficient-2.2}. Suppose that $a$ still satisfies Assumption \ref{A2.2}, and the other terms all satisfy the global Lipschitz conditions, the error estimate \eqref{main aim} for such systems can be similarly performed. There is no big difference for the error estimate, while the details could be tedious.

\begin{assumption}\label{A2.4}
There are constants $L_3>0$ such that 
\begin{align*}
\vert A(x_1,\dots, x_q)-A(y_1, \dots, y_q)\vert\vee   \vert B(x_1,\dots, x_q)-B(y_1, \dots, y_q)\vert\le& L_3\sum_{m=1}^q\vert x_j-y_j\vert,\\
\vert A(0,\dots, 0)\vert\vee \vert B(0,\dots, 0)\vert \le&  L_3
\end{align*}
for all $x_j, y_j\in\mathbb{R}^d, j=1,2,\dots, q$. Moreover, for any $j=1,2,\dots, q$, there is a constant $K_j>0$ such that
\begin{align*}
\vert \kappa_j(x,y)-\kappa_j(\bar{x},\bar{y})\vert\vee   \vert \zeta_j(x,y)-\zeta_j(\bar{x},\bar{y})\vert\le&  K_j(\vert x-\bar{x}\vert+\vert y-\bar{y}\vert),\\
\vert \kappa_j(0,0)\vert\vee\vert \zeta_j(0,0)\vert\le&  K_j
\end{align*}
for all $x, y,\bar{x},\bar{y}\in\mathbb{R}^d$.
\end{assumption}

\begin{remark}\label{remark2.3}
Under Assumption \ref{A2.4}, we can also verify that the following inequalities hold
\begin{align*}
\vert f(x,\mu)-f(y,\sigma)\vert\le&C\big(\vert x-y\vert+\mathbb{W}_2(\mu,\sigma)\big),\\
\vert g(x,\mu)-g(y,\sigma)\vert\le&C\big(\vert x-y\vert+\mathbb{W}_2(\mu,\sigma)\big),\\
\vert f(0,\delta_0)\vert+\vert g(0,\delta_0)\vert\le& C
\end{align*}
for all $x,y\in\mathbb{R}^d$ and all $\mu,\sigma\in\mathcal{P}_2(\mathbb{R}^d)$, which implies the existence, uniqueness and boundedness of the solution of MV-SDEs \eqref{NIPS}.
\end{remark}

\begin{theorem}\label{theorem_x^i-x^i,K,N-2}
Let $p\ge 2$, it holds under Assumptions \ref{A2.1}, \ref{A2.3}, and \ref{A2.4} that
\begin{align*}
\max_{1\le i\le N}\mathbb{E}\left(\sup_{0\le t\le T}\vert x_t^i-X_t^{i,N,\delta}\vert^p\right)\le& CN\left\{^{-\frac{p}{2}}+\sup_{0\le t\le T}(t-\delta_t)^{\frac{p}{2}}+\max_{1\le i\le N}\mathbb{E}\left(\sup_{0\le t\le T}\vert a(X_{t}^{i,N,\delta})-\tilde{a}(X_{t}^{i,N,\delta})\vert^p\right)\right\}.
\end{align*}
\end{theorem}
The proof is similar to that of Theorem \ref{theorem_x^i-x^i,K,N}, so we omit it.

\subsection{Error estimate under higher-order interaction case}\label{subsec2.3}
In this subsection, we extend the above conclusions to equations with higher-order interactions. Let $q$ be any positive integer, take
\begin{align*}
f(x,\mu)=&A\left(\int_{\mathbb{R}^d\times\cdots\times \mathbb{R}^d}\kappa(x,y_1,\dots, y_q)\mu({\rm d}y_1)\cdots\mu({\rm d}y_q) \right),\notag\\
g(x,\mu)=&B\left(\int_{\mathbb{R}^d\times\cdots\times \mathbb{R}^d}\zeta(x,y_1,\dots, y_q) \mu({\rm d}y_1)\cdots\mu({\rm d}y_q) \right).
\end{align*}
Suppose that $a$ still satisfies Assumption \eqref{A2.2}, and the other terms all satisfy the global Lipschitz conditions.

\begin{assumption}\label{A2.5}
There are constants $L_4>0$ such that 
\begin{align*}
\vert A(x)-A(\bar{x})\vert\vee \vert B(x)-B(\bar{x})\vert\le L_4\vert x-\bar{x}\vert& ,\\
\vert \kappa(x,y_1,\dots, y_q)-\kappa(\bar{x},\bar{y}_1\dots,\bar{y}_q)\vert\vee \vert \zeta(x,y_1,\dots, y_q)-\zeta(\bar{x},\bar{y}_1\dots,\bar{y}_q)\vert\le& L_4\left(\vert x-\bar{x}\vert+\sum_{m=1}^q\vert y_j-\bar{y}_j\vert\right),\\
\vert A(0)\vert \vee\vert \kappa(0,\dots,0)\vert\vee \vert B(0)\vert\vee\vert \zeta(0,\dots,0)\vert\le& L_4.
\end{align*}
for all $x, y,\bar{x},\bar{y}\in\mathbb{R}^d$.
\end{assumption}

\begin{remark}\label{remark2.4}
Assumption \ref{A2.5} implies the following growth conditions:
\begin{align*}
\vert A(x)\vert\vee\vert B(x)\vert\le&  L_4\left(1+\vert x\vert\right),\\
\vert\kappa(x,y_1,\dots, y_q)\vert\vee\vert \zeta(x,y_1,\dots, y_q)\vert\le& L_4\left(1+\vert x\vert+\sum_{m=1}^q\vert y_j\vert\right).
\end{align*}
\end{remark}
\begin{remark} \label{f to F}\rm
According to Assumption \ref{A2.5}, using the Kantorovich-Rubinstein dual representation of the Wasserstein distance $\mathbb{W}_1$ and the fact that $\mathbb{W}_1(\mu,\sigma)\le \mathbb{W}_2(\mu,\sigma)$, we can also verify that
\begin{align*}
\vert f(x,\mu)-f(y,\sigma)\vert\le&C\big(\vert x-y\vert+\mathbb{W}_2(\mu,\sigma)\big),\\
\vert g(x,\mu)-g(y,\sigma)\vert\le&C\big(\vert x-y\vert+\mathbb{W}_2(\mu,\sigma)\big),\\
\vert f(0,\delta_0)\vert+\vert g(0,\delta_0)\vert\le& C
\end{align*}
for all $x,y\in\mathbb{R}^d$ and all $\mu,\sigma\in\mathcal{P}_2(\mathbb{R}^d)$. In fact, without loss of generality, we assume that 
\begin{align*}
f(x,\mu)=&A\left(\int_{\mathbb{R}^d\times \mathbb{R}^d}\kappa(x,y_1, y_2)\mu({\rm d}y_1)\mu({\rm d}y_2) \right),
\end{align*}
then 
\begin{align*}
\vert f(x,\mu)-f(y,\sigma)\vert\le&\vert f(x,\mu)-f(x,\sigma)\vert+\vert f(x,\sigma)-f(y,\sigma)\vert\notag\\
\le&L_4\left\vert \int_{\mathbb{R}^d\times \mathbb{R}^d}\kappa(x,y_1, y_2)\mu({\rm d}y_1)\mu({\rm d}y_2) -\int_{\mathbb{R}^d\times \mathbb{R}^d}\kappa(x,y_1, y_2)\sigma({\rm d}y_1)\sigma({\rm d}y_2)\right\vert\notag\\
&+L_4\left\vert \int_{\mathbb{R}^d\times \mathbb{R}^d}\kappa(x,y_1, y_2)\sigma({\rm d}y_1)\sigma({\rm d}y_2) -\int_{\mathbb{R}^d\times \mathbb{R}^d}\kappa(y,y_1, y_2)\sigma({\rm d}y_1)\sigma({\rm d}y_2) \right\vert\notag\\
\le&L_4\left\vert \int_{\mathbb{R}^d\times \mathbb{R}^d}\kappa(x,y_1, y_2)\mu({\rm d}y_1)\mu({\rm d}y_2) -\int_{\mathbb{R}^d\times \mathbb{R}^d}\kappa(x,y_1, y_2)\mu({\rm d}y_1)\sigma({\rm d}y_2)\right\vert\notag\\
&+L_4\left\vert \int_{\mathbb{R}^d\times \mathbb{R}^d}\kappa(x,y_1, y_2)\mu({\rm d}y_1)\sigma({\rm d}y_2) -\int_{\mathbb{R}^d\times \mathbb{R}^d}\kappa(x,y_1, y_2)\sigma({\rm d}y_1)\sigma({\rm d}y_2)\right\vert\notag\\
&+L_4 \int_{\mathbb{R}^d\times \mathbb{R}^d}\left\vert\kappa(x,y_1, y_2)-\kappa(y,y_1, y_2)\right\vert\sigma({\rm d}y_1)\sigma({\rm d}y_2)\notag\\
=:&\Gamma_1+\Gamma_1+\Gamma_3.
\end{align*}
For any given $x\in\mathbb R^d$, define $\phi_x(z)=\frac{1}{L_4}\int_{\mathbb R^d}\kappa(x,y_1,z)\mu({\rm d}y_1)$, then by Assumption \ref{A2.5}, 
\begin{align*}
\vert \phi_x(z)-\phi_x(\bar z)\vert=&\frac{1}{L_4}\left\vert \int_{\mathbb R^d}\kappa(x,y_1,z)\mu({\rm d}y_1)-\int_{\mathbb R^d}\kappa(x,y_1,\bar z)\mu({\rm d}y_1)\right\vert\\
\le &\frac{1}{L_4} \int_{\mathbb R^d}\left\vert\kappa(x,y_1,z)-\kappa(x,y_1,\bar z)\right\vert\mu({\rm d}y_1)\\
\le & \vert z-\bar z\vert.
\end{align*}
Hence $\phi_x(z)$ is Lipschitz continuous w.r.t. $z$, using the Kantorovich-Rubinstein dual representation of the Wasserstein distance $\mathbb{W}_1$ and the fact that $\mathbb{W}_1(\mu,\sigma)\le \mathbb{W}_2(\mu,\sigma)$, one has
\begin{align*}
\Gamma_1=L_4^2\left\vert \int_{\mathbb{R}^d}\phi_x(y_2)\mu({\rm d}y_2) -\int_{\mathbb{R}^d}\phi_x(y_2)\sigma({\rm d}y_2)\right\vert\le L_4^2\mathbb{W}_1(\mu,\sigma)\le L_4^2\mathbb{W}_2(\mu,\sigma).
\end{align*}
Similarly, we can also deduce that $\Gamma_2\le L_4^2\mathbb{W}_2(\mu,\sigma).$ Moreover,
$$\Gamma_3\le L_4^2 \int_{\mathbb{R}^d\times \mathbb{R}^d}\left\vert x-y\right\vert\sigma({\rm d}y_1)\sigma({\rm d}y_2)= L_4^2\vert x-y\vert,$$ hence
$$\vert f(x,\mu)-f(y,\sigma)\vert\le L_4^2\left(\vert x-y\vert+\mathbb{W}_2(\mu,\sigma)\right).$$
Other assertions can be similarly proved. Hence, we can also obtain the existence, uniqueness and boundedness of the solution of equation \eqref{NIPS} with coefficients taking the form of \eqref{coefficient-3.1} and \eqref{coefficient-3.2} following \cite[Theorem 3.3]{G.dosReis2019}.
\end{remark}

\begin{theorem}\label{theorem_x^i-x^i,K,N-3}
Let $p\ge 2$, it holds under Assumptions \ref{A2.1}, \ref{A2.3}, and \ref{A2.5} that
\begin{align*}
\max_{1\le i\le N}\mathbb{E}\left(\sup_{0\le t\le T}\vert x_t^i-X_t^{i,N,\delta}\vert^p\right)\le& C\left\{N^{-\frac{p}{2}}+\sup_{0\le t\le T}(t-\delta_t)^{\frac{p}{2}}+\max_{1\le i\le N}\mathbb{E}\left(\sup_{0\le t\le T}\vert a(X_{t}^{i,N,\delta})-\tilde{a}(X_{t}^{i,N,\delta})\vert^p\right)\right\}.
\end{align*}
\end{theorem}
The procedure for the proof of this theorem is also similar to that of Theorem \ref{theorem_x^i-x^i,K,N}, but for the reliability of the conclusion we still give a detailed proof. Define
\begin{align*}
\hat{\kappa}(x_t^i):=\int_{\mathbb{R}^d\times\cdots\times \mathbb{R}^d}\kappa(x_t^i,y_1,\dots, y_q)\mu_t^{x^i}({\rm d}y_1)\cdots\mu_t^{x^i}({\rm d}y_q),\\
\hat{\zeta}(x_t^i):=\int_{\mathbb{R}^d\times\cdots\times \mathbb{R}^d}\zeta(x_t^i,y_1,\dots, y_q)\mu_t^{x^i}({\rm d}y_1)\cdots\mu_t^{x^i}({\rm d}y_q).
\end{align*}
For $i=1,2,\dots, N$ and $x^i\in\mathbb{R}^d$, define
\begin{align*}
\hat{\kappa}^{N}(x^i):=\frac{1}{N^q}\sum_{j_1,\dots, j_q=1}^N\kappa(x^i,x^{j_1},\dots, x^{j_q}),\quad \hat{\zeta}^{N}(x^i):=\frac{1}{N^q}\sum_{j_1,\dots, j_q=1}^N\zeta(x^i,x^{j_1},\dots, x^{j_q}),
\end{align*}

\begin{lemma}\label{kappa-kappa^N-3}
Let $p\ge2$, for any $t\in[0,T]$, it holds under Assumptions \ref{A2.1} and \ref{A2.5} that
\begin{align*}
\mathbb{E}\left\vert \hat{\kappa}(x_t^i)-\hat{\kappa}^{N}(x_t^i)\right\vert^p\vee\mathbb{E}\left\vert \hat{\zeta}(x_t^i)-\hat{\zeta}^{N}(x_t^i)\right\vert^p\le CN^{-\frac{p}{2}}.
\end{align*}
\end{lemma}
\begin{proof}
According to Jensen's inequality, we have 
\begin{align*}
&\mathbb{E}\vert \hat{\kappa}(x_t^i)-\hat{\kappa}^{N}(x_t^i)\vert^p\notag\\
=&\mathbb{E}\left\vert \frac{1}{N^q}\sum_{j_1,\dots, j_q=1}^N\int_{\mathbb{R}^d\times\cdots\times\mathbb{R}^d}\left(\kappa(x_t^i,y_1,\dots,y_q)-\kappa(x_t^i,x_t^{j_1},\dots,x_t^{j_q})\right)\mu_t^{x^i}({\rm d}y_1)\cdots\mu_t^{x^i}({\rm d}y_q)\right\vert^p\notag\\
\le&CN^{1-q-p}\sum_{j_1,\dots, j_{q-1}=1}^N\mathbb{E}\left\vert \sum_{j_q=1}^N\int_{\mathbb{R}^d\times\cdots\times\mathbb{R}^d}\left(\kappa(x_t^i,y_1,\dots,y_q)-\kappa(x_t^i,x_t^{j_1},\dots,x_t^{j_q})\right)\mu_t^{x^i}({\rm d}y_1)\cdots\mu_t^{x^i}({\rm d}y_q)\right\vert^p\notag\\
\le&CN^{1-q-p}\sum_{j_1,\dots, j_{q-1}=1}^N\mathbb{E}\left\vert \sum_{j_q\in\{i,j_1,\dots, j_{q-1}\}}\int_{\mathbb{R}^d\times\cdots\times\mathbb{R}^d}\left(\kappa(x_t^i,y_1,\dots,y_q)-\kappa(x_t^i,x_t^{j_1},\dots,x_t^{j_q})\right)\mu_t^{x^i}({\rm d}y_1)\cdots\mu_t^{x^i}({\rm d}y_q)\right\vert^p\notag\\
&+CN^{1-q-p}\sum_{j_1,\dots, j_{q-1}=1}^N\mathbb{E}\left\vert \sum_{j_q\neq i,j_1,\dots,j_{q-1}}\int_{\mathbb{R}^d\times\cdots\times\mathbb{R}^d}\left(\kappa(x_t^i,y_1,\dots,y_q)-\kappa(x_t^i,x_t^{j_1},\dots,x_t^{j_q})\right)\mu_t^{x^i}({\rm d}y_1)\cdots\mu_t^{x^i}({\rm d}y_q)\right\vert^p\notag\\
\end{align*}
For $p>2$, according to the tower property, using the conditional Rosenthal inequality (Lemma \ref{conditional Rosenthal-type inequality}), and the H\"older inequality, we can deduce that
{\small
\begin{align*}
&\mathbb{E}\left\vert  \sum_{j_q\neq i,j_1,\dots,j_{q-1}}\int_{\mathbb{R}^d\times\cdots\times\mathbb{R}^d}\left(\kappa(x_t^i,y_1,\dots,y_q)-\kappa(x_t^i,x_t^{j_1},\dots,x_t^{j_q})\right)\mu_t^{x^i}({\rm d}y_1)\cdots\mu_t^{x^i}({\rm d}y_q)\right\vert^p\notag\\
=&\mathbb{E}\left\{\mathbb{E}\left[\left\vert \sum_{j_q\neq i,j_1,\dots,j_{q-1}}\int_{\mathbb{R}^d\times\cdots\times\mathbb{R}^d}\left(\kappa(x_t^i,y_1,\dots,y_q)-\kappa(x_t^i,x_t^{j_1},\dots,x_t^{j_q})\right)\mu_t^{x^i}({\rm d}y_1)\cdots\mu_t^{x^i}({\rm d}y_q)\right\vert^p\Big\vert \sigma(x_t^i,x_t^{j_1},\dots,x_t^{j_{q-1}})\right]\right\}\notag\\
\le&\mathbb{E}\left\{ \sum_{j_q\neq i,j_1,\dots,j_{q-1}}\mathbb{E}\left[\left\vert\int_{\mathbb{R}^d\times\cdots\times\mathbb{R}^d}\left(\kappa(x_t^i,y_1,\dots,y_q)-\kappa(x_t^i,x_t^{j_1},\dots,x_t^{j_q})\right)\mu_t^{x^i}({\rm d}y_1)\cdots\mu_t^{x^i}({\rm d}y_q)\right\vert^p\Big\vert \sigma(x_t^i,x_t^{j_1},\dots,x_t^{j_{q-1}})\right]\right\}\notag\\
&+\mathbb{E}\left\{\left( \sum_{j_q\neq i,j_1,\dots,j_{q-1}}\mathbb{E}\left[\left\vert\int_{\mathbb{R}^d\times\cdots\times\mathbb{R}^d}\left(\kappa(x_t^i,y_1,\dots,y_q)-\kappa(x_t^i,x_t^{j_1},\dots,x_t^{j_q})\right)\mu_t^{x^i}({\rm d}y_1)\cdots\mu_t^{x^i}({\rm d}y_q)\right\vert^2\Big\vert \sigma(x_t^i,x_t^{j_1},\dots,x_t^{j_{q-1}})\right]\right)^{\frac{p}{2}}\right\}\notag\\
\le&\left(1+N^{\frac{p}{2}-1}\right) \sum_{j_q\neq i,j_1,\dots,j_{q-1}}\mathbb{E}\left\vert\int_{\mathbb{R}^d\times\cdots\times\mathbb{R}^d}\left(\kappa(x_t^i,y_1,\dots,y_q)-\kappa(x_t^i,x_t^{j_1},\dots,x_t^{j_q})\right)\mu_t^{x^i}({\rm d}y_1)\cdots\mu_t^{x^i}({\rm d}y_q)\right\vert^p,
\end{align*}}
where $\sigma(x_t^i,x_t^{j_1},\dots,x_t^{j_{q-1}})$ denotes the $\sigma$-algebra generated by $x_t^i,x_t^{j_1},\dots,x_t^{j_{q-1}}$. Then, applying the H\"older inequality, Assumption \ref{A2.5} and Lemma \ref{x_t_bounded}, yields that
{\small
\begin{align}\label{F_1-F_1^N}
&\mathbb{E}\vert \hat{\kappa}(x_t^i)-\hat{\kappa}^{N}(x_t^i)\vert^p\notag\\
\le&Cq^{p-1}N^{1-q-p}\sum_{j_1,\dots, j_{q-1}=1}^N\sum_{j_q\in\{i,j_1,\dots, j_{q-1}\}}\mathbb{E}\left\vert \int_{\mathbb{R}^d\times\cdots\times\mathbb{R}^d}\left(\kappa(x_t^i,y_1,\dots,y_q)-\kappa(x_t^i,x_t^{j_1},\dots,x_t^{j_q})\right)\mu_t^{x^i}({\rm d}y_1)\cdots\mu_t^{x^i}({\rm d}y_q)\right\vert^p\notag\\
&+CN^{-q-\frac{p}{2}}\left(1+N^{1-\frac{p}{2}}\right) \sum_{j_1,\dots, j_{q-1}=1}^N\sum_{j_q\neq i,j_1,\dots,j_{q-1}}\mathbb{E}\left\vert\int_{\mathbb{R}^d\times\cdots\times\mathbb{R}^d}\left(\kappa(x_t^i,y_1,\dots,y_q)-\kappa(x_t^i,x_t^{j_1},\dots,x_t^{j_q})\right)\mu_t^{x^i}({\rm d}y_1)\cdots\mu_t^{x^i}({\rm d}y_q)\right\vert^p\notag\\
\le&CN^{-q-\frac{p}{2}}\left(1+2N^{1-\frac{p}{2}}\right) \sum_{j_1,\dots, j_{q}=1}^N\mathbb{E}\left\vert\int_{\mathbb{R}^d\times\cdots\times\mathbb{R}^d}\left(\kappa(x_t^i,y_1,\dots,y_q)-\kappa(x_t^i,x_t^{j_1},\dots,x_t^{j_q})\right)\mu_t^{x^i}({\rm d}y_1)\cdots\mu_t^{x^i}({\rm d}y_q)\right\vert^p\notag\\
\le&CN^{-q-\frac{p}{2}}\left(1+2N^{1-\frac{p}{2}}\right)\sum_{j_1,\dots, j_{q}=1}^N\mathbb{E}\int_{\mathbb{R}^d\times\cdots\times\mathbb{R}^d}\left\vert\kappa(x_t^i,y_1,\dots,y_q)-\kappa(x_t^i,x_t^{j_1},\dots,x_t^{j_q})\right\vert^p\mu_t^{x^i}({\rm d}y_1)\cdots\mu_t^{x^i}({\rm d}y_q)\notag\\
\le&CN^{-q-\frac{p}{2}}\left(1+2N^{1-\frac{p}{2}}\right)\sum_{j_1,\dots, j_{q}=1}^N\mathbb{E}\int_{\mathbb{R}^d\times\cdots\times\mathbb{R}^d}\left(\vert y_1-x_t^{j_1}\vert^p +\cdots+\vert y_q-x_t^{j_q}\vert^p\right)\mu_t^{x^i}({\rm d}y_1)\cdots\mu_t^{x^i}({\rm d}y_q)\notag\\
\le&CN^{-q-\frac{p}{2}}\left(1+2N^{1-\frac{p}{2}}\right)\sum_{j_1,\dots, j_{q}=1}^N\mathbb{E}\left(\mathbb{E}\vert x_t^i\vert^p+\vert x_t^{j_1}\vert^p +\cdots+\vert x_t^{j_q}\vert^p\right)\notag\\
\le&CN^{-\frac{p}{2}}\left(1+2N^{1-\frac{p}{2}}\right)\notag\\
\le&CN^{-\frac{p}{2}}.
\end{align}}
Analogously we can also get
\begin{align*}
\mathbb{E}\left\vert \hat{\zeta}(x_t^i)-\hat{\zeta}^{N}(x_t^i)\right\vert^p\le CN^{-\frac{p}{2}}
\end{align*}
from Assumption \ref{A2.3}. The assertion for $p=2$ also holds following the H\"older inequality.
\end{proof}
\begin{lemma}\label{X_t-X_delta_t-3}
Let $p\ge 2$, for any $i\in\{1,2,\dots, N\}$ and $t\in[0,T]$, it holds under Assumptions \ref{A2.1}, \ref{A2.3} and \ref{A2.5} that
\begin{align*}
\mathbb{E} \vert X_t^{i,N,\delta}-X_{\delta_t}^{i,N,\delta}\vert^p\le C(t-\delta_t)^{\frac{p}{2}}.
\end{align*}
\end{lemma}
\begin{proof}
Since $\delta_t=[t/\delta]\delta, \delta>0$, it is easy to know that $0\le \delta_t\le t$, then
 \begin{align*}
X_t^{i,N,\delta}=&X_{\delta_t}^{i,N,\delta}+\int_{\delta_t}^t \tilde{a}(X_{\delta_s}^{i,N,\delta}){\rm d}s+\int_{\delta_t}^tA\left(\hat{\kappa}^{N}(X_{\delta_s}^{i,N,\delta})\right){\rm d}s+\int_{\delta_t}^tB\left(\hat{\zeta}^{N}(X_{\delta_s}^{i,N,\delta})\right){\rm d}W_s^i.
\end{align*}
On the other hand, for $s\in[\delta_t,t]$, since $\delta_s=\delta_t$, using Jensen's inequality, the B-D-G inequality, and Remark \ref{remark2.4}, we have
 \begin{align*}
&\mathbb{E}\vert X_t^{i,N,\delta}-X_{\delta_t}^{i,N,\delta}\vert^p\\
\le &C(t-\delta_t)^p\mathbb{E}\left\vert \tilde{a}(X_{\delta_t}^{i,N,\delta})\right\vert^p+C(t-\delta_t)^p\mathbb{E}\left\vert A\left(\hat{\kappa}^{N}(X_{\delta_s}^{i,N,\delta})\right)\right\vert^p+C(t-\delta_t)^{\frac{p}{2}}\mathbb{E}\left\vert B\left(\hat{\zeta}^{N}(X_{\delta_t}^{i,N,\delta})\right)\right\vert^p\\
\le &C(t-\delta_t)^p\mathbb{E}\left\vert \tilde{a}(X_{\delta_t}^{i,N,\delta})\right\vert^p+C(t-\delta_t)^p\left(1+\mathbb{E}\left\vert \hat{\kappa}^{N}(X_{\delta_s}^{i,N,\delta})\right\vert^p\right)+C(t-\delta_t)^{\frac{p}{2}}\left(1+\mathbb{E}\left\vert \hat{\zeta}^{N}(X_{\delta_s}^{i,N,\delta})\right\vert^p\right)\\
=&C(t-\delta_t)^p\mathbb{E}\left\vert \tilde{a}(X_{\delta_t}^{i,N,\delta})\right\vert^p+C(t-\delta_t)^p\left(1+\mathbb{E}\left\vert \frac{1}{N^q}\sum_{j_1,\dots, j_q=1}^N\kappa(X_{\delta_s}^{i,N,\delta},X_{\delta_s}^{j_1,N,\delta},\dots, X_{\delta_s}^{j_q,N,\delta})\right\vert^p\right)\\
&+C(t-\delta_t)^{\frac{p}{2}}\left(1+\mathbb{E}\left\vert \frac{1}{N^q}\sum_{j_1,\dots, j_q=1}^N\zeta(X_{\delta_s}^{i,N,\delta},X_{\delta_s}^{j_1,N,\delta},\dots, X_{\delta_s}^{j_q,N,\delta})\right\vert^p\right)\\
\le &C(t-\delta_t)^p\mathbb{E}\left\vert \tilde{a}(X_{\delta_t}^{i,N,\delta})\right\vert^p+C(t-\delta_t)^p\left(1+\frac{1}{N^q}\sum_{j_1,\dots, j_q=1}^N\mathbb{E}\left\vert \kappa(X_{\delta_s}^{i,N,\delta},X_{\delta_s}^{j_1,N,\delta},\dots, X_{\delta_s}^{j_q,N,\delta})\right\vert^p\right)\\
&+C(t-\delta_t)^{\frac{p}{2}}\left(1+\mathbb{E}\left\vert \frac{1}{N^q}\sum_{j_1,\dots, j_q=1}^N\zeta(X_{\delta_s}^{i,N,\delta},X_{\delta_s}^{j_1,N,\delta},\dots, X_{\delta_s}^{j_q,N,\delta})\right\vert^p\right)\\
\le &C(t-\delta_t)^p\mathbb{E}\left\vert \tilde{a}(X_{\delta_t}^{i,N,\delta})\right\vert^p+C\left((t-\delta_t)^p+(t-\delta_t)^{\frac{p}{2}}\right)\left(1+\frac{1}{N^q}\sum_{j_1,\dots, j_q=1}^N\mathbb{E}\left(1+\vert X_{\delta_s}^{i,N,\delta}\vert^p+\sum_{m=1}^q\vert X_{\delta_s}^{j_m,N,\delta}\vert^p\right)\right)\\
\le &C(t-\delta_t)^p\mathbb{E}\left\vert \tilde{a}(X_{\delta_t}^{i,N,\delta})\right\vert^p+C\left((t-\delta_t)^p+(t-\delta_t)^{\frac{p}{2}}\right)\left(1+\mathbb{E}\vert X_{\delta_s}^{i,N,\delta}\vert^p+\frac{1}{N^q}\sum_{j_1,\dots, j_q=1}^N\sum_{m=1}^q\mathbb{E}\vert X_{\delta_s}^{j_m,N,\delta}\vert^p\right)\\
\le &C(t-\delta_t)^{\frac{p}{2}}\left((t-\delta_t)^{\frac{p}{2}}\mathbb{E}\left\vert \tilde{a}(X_{\delta_t}^{i,N,\delta})\right\vert^p+C\left(1+(t-\delta_t)^{\frac{p}{2}}\right)\left(1+\mathbb{E}\vert X_{\delta_s}^{i,N,\delta}\vert^p\right)\right).
\end{align*}
Since $t-\delta_t\le t\le T$, applying Assumption \ref{A2.3}, it holds that
 \begin{align*}
\mathbb{E}\vert X_t^{i,N,\delta}-X_{\delta_t}^{i,N,\delta}\vert^p\le C(t-\delta_t)^{\frac{p}{2}}.
\end{align*}
\end{proof}

\begin{proof}[Proof of Theorem \ref{theorem_x^i-x^i,K,N-3}]
The proof follows similar steps to Theorem \ref{theorem_x^i-x^i,K,N}, and we only present the different parts here. In fact, according to the definition of $\hat{\kappa}, \hat{\zeta}, \hat{\kappa}^N, \hat{\zeta}^N$ in this subsection, 
we still have
\begin{align*}
e_t^{i,N}=x_t^i-X_t^{i,N,\delta}&=\int_0^t\left(a(x_s^i)-\tilde{a}(X_{\delta_s}^{i,N,\delta})\right){\rm d}s+\int_0^t\left( A\left(\hat{\kappa}(x_s^i)\right)-A\left(\hat{\kappa}^{N}(X_{\delta_s}^{i,N,\delta})\right)\right){\rm d}s\notag\\
&\quad+\int_0^t \left(B\left(\hat{\zeta}(x_s^i)\right)-B\left(\hat{\zeta}^{N}(X_{\delta_s}^{i,N,\delta})\right)\right){\rm d}W_s^i
\end{align*}
for any $i=1,2,\dots,N$. Then, applying the It\^o formula, the Cauchy-Schwarz inequality, and Assumption \ref{A2.5}, it also gives \eqref{e^p}. Moreover, since $a$ also satisfies Assumption \ref{A2.1}, the estimation for $J_1$ is the same, but the estimates of $J_2$ and $J_3$ are slightly different. Actually, it can be deduced from the Young inequality and Assumption \ref{A2.5} that
\begin{align}\label{J_tilde_2}
J_2=&pL_4\int_0^t \vert e_s^{i,N}\vert^{p-1}\left\vert \hat{\kappa}(x_s^i)-\hat{\kappa}^{N}(X_{\delta_s}^{i,N,\delta})\right\vert{\rm d}s\notag\\
\le&pL_4\int_0^t \vert e_s^{i,N}\vert^{p-1}\left(\vert\hat{\kappa}(x_s^i)-\hat{\kappa}^{N}(x_s^i)\vert+\vert\hat{\kappa}^{N}(x_s^i)-\hat{\kappa}^{N}(X_s^{i,N,\delta})\vert+\vert\hat{\kappa}^{N}(X_s^{i,N,\delta})-\hat{\kappa}^{N}(X_{\delta_s}^{i,N,\delta})\vert\right){\rm d}s\notag\\
\le&C\int_0^t\vert e_s^{i,N}\vert^p{\rm d}s+C\int_0^t\vert \hat{\kappa}(x_s^i)-\hat{\kappa}^{N}(x_s^i)\vert^p{\rm d}s+\frac{C}{N^q}\sum_{j_1,\cdots, j_q=1}^N\int_0^t\left(\vert e_s^{i,N}\vert^p+\sum_{m=1}^q\vert e_s^{j_m,N}\vert^p\right){\rm d}s\notag\\
&+\frac{C}{N^q}\sum_{j_1,\cdots, j_q=1}^N\int_0^t\left(\vert X_s^{i,N,\delta}-X_{\delta_s}^{i,N,\delta}\vert^p+\sum_{m=1}^q\vert X_s^{j_m,N,\delta}-X_{\delta_s}^{j_m,N,\delta}\vert^p\right){\rm d}s,
\end{align}
and
\begin{align}\label{J_tilde_3}
J_3=&\frac{p(p-1)L_4^2}{2}\int_0^t \vert e_s^{i,N}\vert^{p-2}\left\vert \hat{\zeta}(x_s^i)-\hat{\zeta}^{N}(X_{\delta_s}^{i,N,\delta})\right\vert^2{\rm d}s\notag\\
\le&C\int_0^t\vert e_s^{i,N}\vert^p{\rm d}s+C\int_0^t\vert \hat{\kappa}(x_s^i)-\hat{\kappa}^{N}(x_s^i)\vert^p{\rm d}s+\frac{C}{N^q}\sum_{j_1,\cdots, j_q=1}^N\int_0^t\left(\vert e_s^{i,N}\vert^p+\sum_{m=1}^q\vert e_s^{j_m,N}\vert^p\right){\rm d}s\notag\\
&+\frac{C}{N^q}\sum_{j_1,\cdots, j_q=1}^N\int_0^t\left(\vert X_s^{i,N,\delta}-X_{\delta_s}^{i,N,\delta}\vert^p+\sum_{m=1}^q\vert X_s^{j_m,N,\delta}-X_{\delta_s}^{j_m,N,\delta}\vert^p\right){\rm d}s.
\end{align}
Substituting \eqref{J_1}, \eqref{J_tilde_2}, and \eqref{J_tilde_3} into \eqref{e^p}, it gives that
\begin{align*}
\vert e_t^{i,N}\vert^p\le&C\int_0^t \vert e_s^{i,N}\vert^p{\rm d}s+C\int_0^t (1+\vert X_s^{i,N,\delta}\vert^r+\vert X_{\delta_s}^{i,N,\delta}\vert^r)^p\vert X_s^{i,N,\delta}-X_{\delta_s}^{i,N,\delta}\vert^p{\rm d}s\notag\\
&+\int_0^t \vert a(X_{\delta_s}^{i,N,\delta})-\tilde{a}(X_{\delta_s}^{i,N,\delta})\vert^p{\rm d}s+C\int_0^t\vert \hat{\kappa}(x_s^i)-\hat{\kappa}^{N}(x_s^i)\vert^p{\rm d}s+\frac{C}{N^q}\sum_{j_1,\cdots, j_q=1}^N\sum_{m=1}^q\int_0^t\vert e_s^{j_m,N}\vert^p{\rm d}s\notag\\
&+\frac{C}{N^q}\sum_{j_1,\cdots, j_q=1}^N\int_0^t\left(\vert X_s^{i,N,\delta}-X_{\delta_s}^{i,N,\delta}\vert^p+\sum_{m=1}^q\vert X_s^{j_m,N,\delta}-X_{\delta_s}^{j_m,N,\delta}\vert^p\right){\rm d}s+C\int_0^t \left\vert \hat{\zeta}(x_s^i)-\hat{\zeta}^{N}(x_s^i)\right\vert^p{\rm d}s
+J_4.
\end{align*}
Thus, similar to the derivation of \eqref{Esupe^p}, one can also obtain 
\begin{align*}
\mathbb{E}\left(\sup_{0\le t\le T_1}\vert e_t^{i,N}\vert^p\right)\le &C\int_0^{T_1}\mathbb{E}\vert e_t^{i,N}\vert^p{\rm d}t+C\int_0^{T_1}\left(\mathbb{E} (1+\vert X_t^{i,N,\delta}\vert^r+\vert X_{\delta_t}^{i,N,\delta}\vert^r)^{2p}\mathbb{E}\vert X_t^{i,N,\delta}-X_{\delta_t}^{i,N,\delta}\vert^{2p}\right)^{1/2}{\rm d}t\notag\\
&+\int_0^{T_1}\mathbb{E} \vert a(X_{\delta_t}^{i,N,\delta})-\tilde{a}(X_{\delta_t}^{i,N,\delta})\vert^p{\rm d}t+C\int_0^{T_1}\mathbb{E} \vert \hat{\kappa}(x_t^i)-\hat{\kappa}^{N}(x_t^i)\vert^p{\rm d}t\notag\\
&+C\int_0^{T_1}\mathbb{E}\vert X_t^{i,N,\delta}-X_{\delta_t}^{i,N,\delta}\vert^p{\rm d}t+C\int_0^{T_1}\mathbb{E} \left\vert \hat{\zeta}(x_t^i)-\hat{\zeta}^{N}(x_t^i)\right\vert^p{\rm d}t.
\end{align*}
Combining Lemmas \ref{kappa-kappa^N-3}, \ref{X_t-X_delta_t-3} and Assumption \ref{A2.3}, the assertion holds true again.
\end{proof}


\section{Dimension-independent rate of propagation of chaos}\label{sec4}
As we explained in the subsection \ref{subsec2.2}, if we take $\tilde{a}=a$ and $\delta=t$ in \eqref{IPS-2}, which implies $\delta_t=t$, let $X_t^{i,N}:=X_t^{i,N,t}$, then \eqref{IPS-2} becomes
\begin{align*}
{\rm d}X_t^{i,N}=& a(X_{t}^{i,N}){\rm d}t+f(X_{t}^{i,N},\hat\mu_{t}^{X,N}){\rm d}t+g(X_{t}^{i,N},\hat\mu_{t}^{X,N}){\rm d}W^i_t,
\end{align*}
which is the corresponding interacting particle system of \eqref{NIPS}, and the following quantitive convergence rate of PoC follows as an immediate by-product of Theorem \ref{theorem_x^i-x^i,K,N}.

\begin{theorem}\label{theorem_x^i-X^i,N}
Let $p\ge 2$, it holds under Assumptions \ref{A2.1}-\ref{A2.3} that
\begin{align*}
\mathbb{E}\left(\sup_{0\le t\le T}\vert x_t^i-X_t^{i,N}\vert^p\right)\le CN^{-\frac{p}{2}},
\end{align*}
where $x_t^i$ and $X_t^{i,N}$ are the solutions of MV-SDEs \eqref{NIPS} with coefficient given by \eqref{coefficient-1} and its corresponding interacting particle system, respectively. 
\end{theorem}
In alignment with Theorems \ref{theorem_x^i-x^i,K,N-2} and \ref{theorem_x^i-x^i,K,N-3}, when the coefficients in equation \eqref{NIPS} are specified by \eqref{coefficient-2.1}-\eqref{coefficient-2.2} or \eqref{coefficient-3.1}-\eqref{coefficient-3.2}, the same convergence rate of PoC can be achieved. To avoid redundancy, we omit the explicit statement of the corresponding theorem here. 

In fact, in the case where the coefficients of MV-SDEs are linear in measure, i.e.,
 \begin{align*}
f(x,\mu)=\int_{\mathbb{R}^d}\kappa(x,y)\mu({\rm d}y),~g(x,\mu)=\int_{\mathbb{R}^d}\zeta(x,y)\mu({\rm d}y),
\end{align*}
\cite{Sznitman1991book} showed that $\mathbb{E}\left(\sup_{0\le t\le T}\vert x_t^i-X_t^{i,N}\vert\right)\le CN^{-\frac{1}{2}}$. While in the case of the coefficients of MV-SDEs are Lipschitz continuous in measure, the rate of strong PoC deteriorates with the dimension $d$, for instance, the PoC rate under the $L^2$-norm presented in \cite{Carmona2018, Xingyuan Chen2023Arxiv, G.dosReis2022} is
  \begin{align*}
\sup_{1\le i\le N}\mathbb{E}\left( \sup_{0\le t\le T}\vert x_t^i-X_t^{i,N}\vert^2\right)\le C
\begin{cases}
N^{-1/2}& {\rm if} ~~d<4,\\
N^{-1/2}\log(N)& {\rm if} ~~ d=4,\\
N^{-2/d}& {\rm if} ~~ d>4,
\end{cases}
\end{align*}
and the rate under the $L^p$-norm for $p>0$ and $q>p$ given in \cite{Guo2024ESAIM,YatingLiu2024} is
 \begin{align}\label{L^p rate}
\sup_{1\le i\le N}\sup_{0\le t\le T}\mathbb{E}\vert x_t^i-X_t^{i,N}\vert^p\le C
\begin{cases}
N^{-1/2}+N^{-(q-p)/q}& {\rm if} ~~p>d/2~~{\rm and}~~ q\neq 2p,\\
N^{-1/2}\log(N)+N^{-(q-p)/q}& {\rm if} ~~p=d/2~~{\rm and}~~ q\neq 2p,\\
N^{-2/d}+N^{-(q-p)/q}& {\rm if} ~~p\in[2, d/2)~~{\rm and}~~ q\neq d/(d-p).
\end{cases}
\end{align}
It can be observed that the PoC rate presented above depends on the dimension of the system and decays at approximately $O(N^{-1/2p})$  when $p>d/2$.  At this point, if the coefficients simultaneously satisfy some stronger regularity conditions, the rate $1/2$ was previously established in \cite[Lemma 5.1]{F.Delarue2019} and \cite[Theorem 4.2]{Szpruch2021}. However, in this work, we treat some special cases of nonlinear MV-SDEs with coefficients do not claim strong regularity, this is obviously a different set of sufficient conditions compared to the existing results.

\section{Strong convergence rate of the time-stepping method}\label{The time-stepping scheme}
The convergence of the time-stepping method has been well studied in many literatures, such as tamed EM method (\cite{G.dosReis2022}), truncated EM method (\cite{Guo2024ESAIM}), adaptive EM method (\cite{Reisinger2022}), and so on. The convergence rate of the time-stepping scheme is not the concern of this paper, but in order to give a computable numerical scheme for MV-SDEs, here we introduce the tamed EM method and utilize it in the next section to compute the solution of the test equations, which then verifies the convergence order of the PoC. Here we only give the numerical scheme of equation \eqref{NIPS} when the coefficients are taken to be \eqref{coefficient-1} and verify Assumption \ref{A2.3}. Finally, the order of convergence of the numerical method is obtained according to Theorem \ref{theorem_x^i-x^i,K,N}.

In the following, let $M\in \mathbb{Z}_+$, $\Delta=T/M$ be the step size, we take $\delta=\Delta$ and 
\begin{align*}
\tilde{a}(x)=\frac{a(x)}{1+\sqrt{\Delta}\vert a(x)\vert},
\end{align*}
in \eqref{IPS-2}, and establish the convergence of the tamed EM method for the particle system \eqref{NIPS} with coefficients given by \eqref{coefficient-1}. It is easy to known that
\begin{align}\label{f_D<f}
\vert \tilde{a}(x)\vert\le \min\{\vert a(x)\vert, \Delta^{-1/2}\},
\end{align} 
and $\delta_t=[t/\Delta]\Delta$, then \eqref{IPS-2} becomes
\begin{align}\label{NIPS-3.2}
{\rm d}X_t^{i,N,\Delta}=&\frac{a(X_{\delta_t}^{i,N,\Delta})}{1+\sqrt{\Delta}\vert a(X_{\delta_t}^{i,N,\Delta})\vert}{\rm d}t+A\left(\frac{1}{N}\sum_{j=1}^N\kappa(X_{\delta_t}^{i,N,\Delta},X_{\delta_t}^{j,N,\Delta})\right){\rm d}t+B\left(\frac{1}{N}\sum_{j=1}^N\zeta(X_{\delta_t}^{i,N,\Delta},X_{\delta_t}^{j,N,\Delta}) \right){\rm d}W^i_t,
\end{align}
which is in accordance with the continuous version of the tamed EM method for \eqref{NIPS} with coefficients given by \eqref{coefficient-1}.

Under the settings above, we can verify Assumption \ref{A2.3} easily.
\begin{lemma}\label{numerical solution bounded-2}
Let $p\ge 2$, under Assumptions \ref{A2.1}-\ref{A2.3}, it holds that
\begin{align*}
\sup_{1\le i\le N}\left\{\mathbb{E}\left(\sup_{0\le t\le T}\vert X_t^{i,N,\Delta}\vert^p\right)\vee\mathbb{E}\left(\sup_{0\le t\le T}\vert \tilde{a}(X_t^{i,N,\Delta})\vert^p\right)\right\}\le C.
\end{align*}
\end{lemma}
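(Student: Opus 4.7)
The plan is to prove the stronger statement
\[
\sup_{1\le i\le N}\mathbb{E}\Bigl(\sup_{0\le t\le T}|X_t^{i,N,\Delta}|^r\Bigr)\le C_r
\]
for every $r\ge 2$, with $C_r$ independent of $\Delta$ and $N$. The second half of the lemma will then be immediate, because \eqref{f_D<f} together with the polynomial growth bound $|a(x)|\le L_1(|x|+|x|^{q+1})+L$ of Remark \ref{remark1} gives $|\tilde a(x)|^p\le C(1+|x|^{(q+1)p})$, which reduces the claim for $\tilde a(X^{i,N,\Delta})$ to the moment bound at exponent $(q+1)p$. Existence and uniqueness of \eqref{NIPS-3.2} is automatic on each subinterval $[t_k,t_{k+1}]$, since its coefficients there are $\mathcal{F}_{t_k}$-measurable constants, so only the moment control is nontrivial.

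For the moment bound I would apply It\^o's formula to $\phi(X_t^{i,N,\Delta})$ with $\phi(x)=(1+|x|^2)^{r/2}$, then take $\sup_{t\le T_1}$ and expectation. The drift contribution $\nabla\phi(X_t)^\top \tilde a(X_{\delta_t})$ is treated by splitting $X_t^\top\tilde a(X_{\delta_t})=X_{\delta_t}^\top\tilde a(X_{\delta_t})+(X_t-X_{\delta_t})^\top\tilde a(X_{\delta_t})$. The first summand is estimated via
$\langle x,\tilde a(x)\rangle=\langle x,a(x)\rangle/(1+\sqrt{\Delta}|a(x)|)\le K(1+|x|^2)$, which follows from the one-sided Lipschitz inequality in Remark \ref{remark1} together with the sign-preserving structure of the taming: dividing by a factor in $(0,1]$ cannot increase a positive quantity, and any negative part is harmless. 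The second summand uses $|\tilde a|\le\Delta^{-1/2}$ from \eqref{f_D<f} combined with the small-increment bound $\mathbb{E}|X_t-X_{\delta_t}|^r\le C\Delta^{r/2}\bigl(1+\sup_i\mathbb{E}|X_{\delta_t}^{i,N,\Delta}|^r\bigr)$ derived directly from \eqref{NIPS-3.2}, whose key ingredient is the identity $|\tilde a(X_{\delta_t})|^2\,(t-\delta_t)\le 1$. The $f$-drift, the Hessian term generated by the diffusion, and the stochastic integral are handled by Remark \ref{f to F} together with the Burkholder-Davis-Gundy inequality; exchangeability of the $N$ particles collapses the empirical average $\tfrac{1}{N}\sum_j|X_{\delta_t}^{j,N,\Delta}|^r$ into $\sup_i\mathbb{E}|X_{\delta_t}^{i,N,\Delta}|^r$. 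Setting $\varphi(T_1):=\sup_i\mathbb{E}\sup_{t\le T_1}|X_t^{i,N,\Delta}|^r$ and absorbing the martingale supremum into the left-hand side yields an integral inequality $\varphi(T_1)\le C+C\int_0^{T_1}\varphi(s)\,ds$, which closes by Gr\"onwall's lemma.

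The delicate point is ensuring that no estimate accrues a $\Delta$-dependent blow-up, since the only $\Delta$-uniform bound on $\tilde a$ is the pessimistic $|\tilde a|\le\Delta^{-1/2}$. The saving is the precise scaling $|\tilde a(X_{\delta_t})|\cdot\sqrt{t-\delta_t}\le 1$ built into the tamed scheme, which exactly compensates for the super-linear growth of $a$ in every cross term produced by the $X_t$--$X_{\delta_t}$ decomposition. Once this compensation is exploited together with the one-sided Lipschitz control of $\langle x,a(x)\rangle$, the computation proceeds along the classical pattern of tamed Euler-Maruyama moment estimates, adapted here to the interacting-particle setting through exchangeability.
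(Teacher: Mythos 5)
Your proposal follows essentially the same route as the paper's proof: It\^o's formula for a $p$-th power functional, the decomposition $\langle X_t,\tilde a(X_{\delta_t})\rangle=\langle X_t-X_{\delta_t},\tilde a(X_{\delta_t})\rangle+\langle X_{\delta_t},\tilde a(X_{\delta_t})\rangle$, the compensation $|\tilde a(X_{\delta_t})|\sqrt{t-\delta_t}\le 1$ for the cross term, exchangeability to collapse the empirical averages, BDG plus Gr\"onwall, and finally the reduction of the $\tilde a$-bound to a higher-order moment bound via $|\tilde a|\le|a|$ and the polynomial growth of $a$. The only cosmetic difference is your use of $(1+|x|^2)^{r/2}$ in place of $|x|^p$, which changes nothing of substance.
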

\begin{proof}
For any $i\in\{1,\dots, N\}$ and $p>2$, using the It\^o formula to $\vert X_t^{i,N,\Delta}\vert^p$ with $X_t^{i,N,\Delta}$ satisfies \eqref{NIPS-3.2}, we have
\begin{align*}
\vert X_t^{i,N,\Delta}\vert^p\le&\vert x_0^i\vert^p+p\int_{0}^t \vert X_s^{i,N,\Delta}\vert^{p-2}\left\langle X_s^{i,N,\Delta},\tilde{a}(X_{\delta_s}^{i,N,\Delta})\right\rangle{\rm d}s\notag\\
&+p\int_{0}^t \vert X_s^{i,N,\Delta}\vert^{p-1}\left\vert A\left(\frac{1}{N}\sum_{j=1}^N\kappa(X_{\delta_s}^{i,N,\Delta},X_{\delta_s}^{j,N,\Delta})\right)\right\vert{\rm d}s\notag\\
&+p\int_{0}^t \vert X_s^{i,N,\Delta}\vert^{p-1}\left\vert B\left(\frac{1}{N}\sum_{j=1}^N\zeta(X_{\delta_s}^{i,N,\Delta},X_{\delta_s}^{j,N,\Delta})\right)\right\vert{\rm d}W_s^i\notag\\
&+\frac{p(p-1)}{2}\int_0^{t}\vert X_s^{i,N,\Delta}\vert^{p-2} \left\vert B\left(\frac{1}{N}\sum_{j=1}^N\zeta(X_{\delta_s}^{i,N,\Delta},X_{\delta_s}^{j,N,\Delta})\right)\right\vert^2{\rm d}s.
\end{align*}
Applying the definition of $\tilde{a}$ and Remark \ref{remark1}, using Young's inequality, one can deduce that 
\begin{align*}
\vert X_t^{i,N,\Delta}\vert^p\le&\vert x_0^i\vert^p+p\int_{0}^t \vert X_s^{i,N,\Delta}\vert^{p-2}\left\langle X_s^{i,N,\Delta}-X_{\delta_s}^{i,N,\Delta},\tilde{a}(X_{\delta_s}^{i,N,\Delta})\right\rangle{\rm d}s\notag\\
&+p\int_{0}^t \frac{\vert X_s^{i,N,\Delta}\vert^{p-2}\left\langle X_{\delta_s}^{i,N,\Delta},a(X_{\delta_s}^{i,N,\Delta})\right\rangle}{1+\sqrt{\Delta}\vert a(X_{\delta_s}^{i,N,\Delta})\vert}{\rm d}s\notag\\
&+C\frac{1}{N}\sum_{j=1}^N\int_{0}^t\left( \vert X_s^{i,N,\Delta}\vert^{p}+\vert X_{\delta_s}^{i,N,\Delta}\vert^{p}+\vert X_{\delta_s}^{j,N,\Delta}\vert^{p}\right){\rm d}s\notag\\
&+C\frac{1}{N}\sum_{j=1}^N\int_{0}^t \vert X_s^{i,N,\Delta}\vert^{p-1}\left(1+\vert X_{\delta_s}^{i,N,\Delta}\vert+\vert X_{\delta_s}^{j,N,\Delta}\vert\right){\rm d}W_s^i.
\end{align*}
Then for any $T_1\in[0,T]$, since $\{X_t^{i,N,\Delta}\}_{1\le i\le N}$ are identically distributed, according to the Young and B-D-G inequalities, we have
\begin{align}\label{A_1+A_2+A_3-2}
\mathbb{E}\left(\sup_{0\le t\le T_1}\vert X_t^{i,N,\Delta}\vert^p\right)\le&\mathbb{E}\vert x_0^i\vert^p+p\mathbb{E}\sup_{0\le t\le T_1}\int_{0}^t \vert X_s^{i,N,\Delta}\vert^{p-2}\left\langle X_s^{i,N,\Delta}-X_{\delta_s}^{i,N,\Delta},\tilde{a}(X_{\delta_s}^{i,N,\Delta})\right\rangle{\rm d}s\notag\\
&+C\frac{1}{N}\sum_{j=1}^N\mathbb{E}\int_{0}^{T_1}\left( 1+\vert X_t^{i,N,\Delta}\vert^{p}+\vert X_{\delta_t}^{i,N,\Delta}\vert^{p}+\vert X_{\delta_t}^{j,N,\Delta}\vert^{p}\right){\rm d}t\notag\\
&+C\frac{1}{N}\sum_{j=1}^N\mathbb{E}\left(\int_{0}^{T_1} \vert X_t^{i,N,\Delta}\vert^{2(p-1)}\left(1+\vert X_{\delta_t}^{i,N,\Delta}\vert+\vert X_{\delta_t}^{j,N,\Delta}\vert\right)^2{\rm d}t\right)^{1/2}\notag\\
\le &C(1+\mathbb{E}\vert x_0^i\vert^p)+\frac{1}{2}\mathbb{E}\left(\sup_{0\le t\le T_1}\vert X_t^{i,N,\Delta}\vert^p\right)+C\int_{0}^{T_1}\mathbb{E}\left(\sup_{0\le s\le t} \vert X_s^{i,N,\Delta}\vert^{p}\right){\rm d}t\notag\\
&+p\mathbb{E}\sup_{0\le t\le T_1}\int_{0}^t \vert X_s^{i,N,\Delta}\vert^{p-2}\left\langle X_s^{i,N,\Delta}-X_{\delta_s}^{i,N,\Delta},\tilde{a}(X_{\delta_s}^{i,N,\Delta})\right\rangle{\rm d}s.
\end{align}
According to \eqref{NIPS-3.2} and Remark \ref{remark1}, recall that $\vert \tilde{a}\vert\le \Delta^{-1/2}$ and $\delta_s=\delta_t$ for $s\in [\delta_t,t]$, using the Young inequality once again, it holds that
\begin{align*}
&\mathbb{E}\sup_{0\le t\le T_1}\int_{0}^t \vert X_s^{i,N,\Delta}\vert^{p-2}\left\langle X_s^{i,N,\Delta}-X_{\delta_s}^{i,N,\Delta},\tilde{a}(X_{\delta_s}^{i,N,\Delta})\right\rangle{\rm d}s\\
\le&\mathbb{E}\int_{0}^{T_1} \vert X_t^{i,N,\Delta}\vert^{p-2}\vert \tilde{a}(X_{\delta_t}^{i,N,\Delta})\vert^2\Delta{\rm d}t\\
&+\mathbb{E}\int_{0}^{T_1}\vert X_t^{i,N,\Delta}\vert^{p-2} \left\vert  A\left(\frac{1}{N}\sum_{j=1}^N\kappa(X_{\delta_t}^{i,N,\Delta},X_{\delta_t}^{j,N,\Delta})\right)(t-\delta_t)\right\vert\vert \tilde{a}(X_{\delta_t}^{i,N,\Delta})\vert{\rm d}t\\
&+\mathbb{E}\int_{0}^{T_1}\vert X_t^{i,N,\Delta}\vert^{p-2} \left\vert B\left(\frac{1}{N}\sum_{j=1}^N\zeta(X_{\delta_t}^{i,N,\Delta},X_{\delta_t}^{j,N,\Delta})\right)(W_t^i-W_{\delta_t}^i)\right\vert\vert \tilde{a}(X_{\delta_t}^{i,N,\Delta})\vert{\rm d}t\\
\le&C\frac{1}{N}\sum_{j=1}^N\int_{0}^{T_1}  \left(1+\mathbb{E}\vert X_t^{i,N,\Delta}\vert^{p}+\mathbb{E}\vert X_{\delta_t}^{i,N,\Delta}\vert^p+\mathbb{E}\vert X_{\delta_t}^{j,N,\Delta}\vert^p\right){\rm d}t\\
&+ \mathbb{E}\int_{0}^{T_1}\left\vert B\left(\frac{1}{N}\sum_{j=1}^N\zeta(X_{\delta_t}^{i,N,\Delta},X_{\delta_t}^{j,N,\Delta})\right)(W_t^i-W_{\delta_t}^i)\right\vert^{\frac{p}{2}}\Delta^{-p/4}{\rm d}t\\
\le&C+C\int_{0}^{T_1}\mathbb{E}\left(\sup_{0\le s\le t} \vert X_s^{i,N,\Delta}\vert^p\right){\rm d}t.
\end{align*}
Substituting this inequality into \eqref{A_1+A_2+A_3-2}, it can be derived that
\begin{align*}
\mathbb{E}\left(\sup_{0\le t\le T_1}\vert X_t^{i,N,\Delta}\vert^p\right)\le C+C\int_{0}^{T_1} \mathbb{E}\left(\sup_{0\le s\le t}\vert X_s^{i,N,\Delta}\vert^p\right){\rm d}t.
\end{align*}
Using the Gronwall inequality, it gives
\begin{align*}
\mathbb{E}\left(\sup_{0\le t\le T_1}\vert X_t^{i,N,\Delta}\vert^p\right)\le C{\rm e}^{CT_1}, \forall T_1\in[0,T],
\end{align*}
therefore,
\begin{align*}
\sup_{1\le i\le N}\mathbb{E}\left(\sup_{0\le t\le T}\vert X_t^{i,N,\Delta}\vert^p\right)\le C.
\end{align*}
Similarly, we can also get the 2nd moment boundedness of the numerical solution. Moreover, since $\vert \tilde{a}(x)\vert\le \vert a(x)\vert$, using Remark \ref{remark1}, one has
\[
\mathbb{E}\left(\sup_{0\le t\le T}\vert \tilde{a}(X_t^{i,N,\Delta})\vert^p\right)\le \mathbb{E}\left(\sup_{0\le t\le T}\vert a(X_t^{i,N,\Delta})\vert^p\right)\le C+C\mathbb{E}\left(\sup_{0\le t\le T}\vert X_t^{i,N,\Delta}\vert^{p(q+1)}\right).
\]
Note that $\mathbb{E}\left(\sup_{t\in[0,T]}\vert X_t^{i,N,\Delta}\vert^p\right)\le C$ holds for all $p\ge 2$, then for any given $p\ge 2$ and $q>0$, we can also prove $\mathbb{E}\left(\sup_{t\in[0,T]}\vert X_t^{i,N,\Delta}\vert^{p(q+1)}\right)\le C$, which gives 
\[
\mathbb{E}\left(\sup_{0\le t\le T}\vert \tilde{a}(X_t^{i,N,\Delta})\vert^p\right)\le C.
\]
The proof is completed.
\end{proof} 

\begin{lemma}\label{a-a_tilde}
Let $p\ge 2$, under Assumption \ref{A2.1}, it holds that
\begin{align*}
\sup_{1\le i\le N}\mathbb{E}\left(\sup_{0\le t\le T}\vert a(X_{t}^{i,N,\Delta})-\tilde{a}(X_{t}^{i,N,\Delta})\vert^p\right)\le C\Delta^{\frac{p}{2}}.
\end{align*}
\end{lemma}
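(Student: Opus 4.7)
The plan is to exploit the explicit algebraic form of $\tilde a$. By the definition
\[
\tilde a(x)=\frac{a(x)}{1+\sqrt{\Delta}\,\vert a(x)\vert},
\]
we can write, for every $x\in\mathbb{R}^d$,
\[
a(x)-\tilde a(x)
= a(x)\cdot\frac{\sqrt{\Delta}\,\vert a(x)\vert}{1+\sqrt{\Delta}\,\vert a(x)\vert},
\]
so that the elementary bound
\[
\bigl\vert a(x)-\tilde a(x)\bigr\vert
\le \sqrt{\Delta}\,\vert a(x)\vert^{2}
\]
is immediate (since the denominator $1+\sqrt{\Delta}\,\vert a(x)\vert\ge 1$). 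Raising to the $p$-th power pulls out a factor $\Delta^{p/2}$ and leaves me with the task of controlling $\mathbb{E}\sup_{0\le t\le T}\vert a(X_t^{i,N,\Delta})\vert^{2p}$.

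For that, I would invoke the polynomial growth of $a$ recorded in Remark \ref{remark1}: $\vert a(x)\vert\le L_1(\vert x\vert+\vert x\vert^{q+1})+L$, which combined with \eqref{C_p inequality} yields
\[
\vert a(x)\vert^{2p}\le C\bigl(1+\vert x\vert^{2p(q+1)}\bigr).
\]
Substituting $x=X_t^{i,N,\Delta}$, taking the supremum over $t\in[0,T]$ and then the expectation, I only need a uniform-in-$N$ moment bound of order $2p(q+1)$ for the tamed EM particles.

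The moment bound is exactly what Lemma \ref{numerical solution bounded-2} supplies: its conclusion is valid for \emph{any} $p\ge 2$, so applying it with exponent $2p(q+1)$ in place of $p$ gives $\sup_{1\le i\le N}\mathbb{E}\bigl(\sup_{0\le t\le T}\vert X_t^{i,N,\Delta}\vert^{2p(q+1)}\bigr)\le C$. Putting the pieces together,
\[
\mathbb{E}\Bigl(\sup_{0\le t\le T}\vert a(X_t^{i,N,\Delta})-\tilde a(X_t^{i,N,\Delta})\vert^{p}\Bigr)
\le \Delta^{p/2}\,\mathbb{E}\Bigl(\sup_{0\le t\le T}\vert a(X_t^{i,N,\Delta})\vert^{2p}\Bigr)
\le C\Delta^{p/2},
\]
uniformly in $i$, which is the desired estimate.

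There is essentially no obstacle here: the argument is a two-line algebraic identity followed by an invocation of the already established moment bound. The only thing to double-check is that Lemma \ref{numerical solution bounded-2} indeed applies to the higher exponent $2p(q+1)$; since its statement is phrased for arbitrary $p\ge 2$ and $q$ is a fixed constant coming from Assumption \ref{A2.2}, this is automatic and the result follows.
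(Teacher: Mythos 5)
Your argument is correct and takes essentially the same route as the paper: both rest on the identity $a(x)-\tilde a(x)=a(x)\cdot\frac{\sqrt{\Delta}\vert a(x)\vert}{1+\sqrt{\Delta}\vert a(x)\vert}$, followed by the polynomial growth of $a$ from Remark \ref{remark1} and the uniform moment bounds of Lemma \ref{numerical solution bounded-2} applied with a higher exponent. If anything, your intermediate bound $\vert a-\tilde a\vert\le\sqrt{\Delta}\vert a\vert^{2}$ (hence the need for $2p(q+1)$-th moments of the particles) is spelled out more carefully than the paper's displayed step, and the conclusion is identical.
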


\begin{proof}
For any $i=1,2,\dots, N$, 
\begin{align*}
&\mathbb{E}\left(\sup_{0\le t\le T}\vert a(X_{t}^{i,N,\Delta})-\tilde{a}(X_{t}^{i,N,\Delta})\vert^p\right)=\mathbb{E}\left(\sup_{0\le t\le T}\left\vert a(X_{t}^{i,N,\Delta})-\frac{a(X_{t}^{i,N,\Delta})}{1+\sqrt{\Delta}\vert a(X_{t}^{i,N,\Delta})\vert}\right\vert^p\right)\\
\le& \mathbb{E}\left(\sup_{0\le t\le T}\vert a(X_{t}^{i,N,\Delta})\vert^p\right)\Delta^{\frac{p}{2}}\le C\Delta^{\frac{p}{2}}\left(1+ \mathbb{E}\sup_{0\le t\le T}\vert X_{t}^{i,N,\Delta}\vert^{p(q+1)}\right)\le C\Delta^{\frac{p}{2}}.
\end{align*}
\end{proof}


Recall that we take $\delta=\Delta$ in this subsection, which gives $\delta_t=[t/\Delta]\Delta$, then it is easy to known that $t-\delta_t\le \Delta$ for all $t\in [0,T]$. Substituting this statement and Lemma \ref{a-a_tilde} into Theorem \ref{theorem_x^i-x^i,K,N}, one can obtain the following convergence result of the proposed numerical method for the MV-SDEs \eqref{NIPS}.
\begin{theorem}
Let $p\ge 2$, it holds under Assumption \ref{A2.1} that
\begin{align*}
\sup_{1\le i\le N}\mathbb{E}\left(\sup_{0\le t\le T}\vert x_t^i-X_t^{i,N,\Delta}\vert^p\right)\le CN^{-\frac{p}{2}}+C\Delta^{\frac{p}{2}}.
\end{align*}
\end{theorem}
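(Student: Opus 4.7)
The plan is straightforward since the real analytic work has already been packaged into Theorem \ref{theorem_x^i-x^i,K,N} and the two lemmas of this subsection. The argument reduces to verifying the hypotheses of Theorem \ref{theorem_x^i-x^i,K,N} and then specializing its conclusion to the present choice $\tilde{a}(x) = a(x)/(1+\sqrt{\Delta}\,\vert a(x)\vert)$ and $\delta = \Delta$.

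First I would check the hypotheses. Assumptions \ref{A2.1}--\ref{A2.3} are in force by hypothesis. Assumption \ref{boundedness of a_tilde}, which demands existence/uniqueness of the scheme together with uniform $p$-th moment bounds on both $X^{i,N,\Delta}$ and $\tilde{a}(X^{i,N,\Delta})$, is exactly the content of Lemma \ref{numerical solution bounded-2}. Next I would estimate the three terms on the right-hand side of the bound furnished by Theorem \ref{theorem_x^i-x^i,K,N}. The first, $N^{-p/2}$, is already in the required form. For the second, the choice $\delta = \Delta$ gives $\delta_t = [t/\Delta]\Delta$, whence $0 \le t - \delta_t < \Delta$ uniformly in $t \in [0,T]$, so $\sup_{0\le t\le T}(t-\delta_t)^{p/2} \le \Delta^{p/2}$. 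For the third, Lemma \ref{a-a_tilde} directly provides $\mathbb{E}\bigl(\sup_{0\le t\le T}\vert a(X_t^{i,N,\Delta})-\tilde{a}(X_t^{i,N,\Delta})\vert^p\bigr) \le C\Delta^{p/2}$, uniformly in $i$. Summing the three contributions and taking the supremum over $i \in \{1,\dots,N\}$ (which is preserved since every bound was uniform in $i$) yields $C N^{-p/2} + C \Delta^{p/2}$, which is the claim.

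There is no genuine obstacle at this stage; the heavy work was done upstream, in the proof of Theorem \ref{theorem_x^i-x^i,K,N} (postponed to the appendix) and in deriving the moment bound of Lemma \ref{numerical solution bounded-2}, whose crucial ingredient is the universal estimate $\vert \tilde{a}\vert \le \Delta^{-1/2}$ used to tame the discretization cross-term $\langle X_s^{i,N,\Delta}-X_{\delta_s}^{i,N,\Delta},\tilde{a}(X_{\delta_s}^{i,N,\Delta})\rangle$. The only small consistency check worth making explicit is that the higher-order moment $\mathbb{E}\sup_{t\in[0,T]}\vert X_t^{i,N,\Delta}\vert^{p(q+1)}$ invoked inside the proof of Lemma \ref{a-a_tilde} is indeed at our disposal; since Lemma \ref{numerical solution bounded-2} was established for every $p \ge 2$, applying it with exponent $p(q+1)$ supplies what is needed and closes the argument.
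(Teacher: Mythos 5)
Your proposal is correct and follows exactly the route the paper takes: the paper also obtains this theorem by substituting the bound $t-\delta_t\le\Delta$ and Lemma \ref{a-a_tilde} into Theorem \ref{theorem_x^i-x^i,K,N}, with Lemma \ref{numerical solution bounded-2} supplying Assumption \ref{boundedness of a_tilde}. Your extra remark about needing the $p(q+1)$-th moment inside Lemma \ref{a-a_tilde} is a sensible consistency check that the paper handles the same way.
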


\section{Numerical simulation}\label{Numerical simulation}
We illustrate the convergence rate of the numerical simulation on the following examples. Since the convergence result for the tamed EM discretization is quite routine, we only give numerical tests on the convergence rate of PoC.  As the ``true” solution of the considered system is unknown, the errors for these examples are calculated in reference to a proxy solution given by an approximation with a larger number of particles. To be more specific, the strong convergence behavior on PoC is measured by
\begin{align*}
{\rm Propagation~of~chaos~error~(PoC-Error)}\approx\left(\frac{1}{\bar{N}}\sum_{i=1}^{\bar{N}} \vert X_T^{i,N,\Delta}-X_T^{i,\bar{N},\Delta}\vert^p\right)^{1/p}
\end{align*}
for a fixed time step size $\Delta=2^{-10}$, other parameters will be specified in the following. By performing a linear regression on the logarithm of the error and the number of particles using the least squares method, we give the images of the order of convergence. In this section, let $\phi(x)$ denote $\left(\phi(x_1),\phi(x_2),\dots,\phi(x_d)\right)^{\rm T}$ for $x\in\mathbb{R}^d$ and let $\phi(x)=\left(\phi(x_{ij})\right)_{d\times d}$ for $x\in\mathbb{R}^{d\times d}$.  

\begin{example}\label{exam1}\rm
First, we consider a scalar nonlinear MV-SDE \eqref{NIPS} with coefficients taking in the form \eqref{coefficient-1} under
\begin{align*}
a(x)=x-x^3,\quad A(x)=\frac{1}{1+{\rm e}^{-x}},\quad \kappa(x,y)=\arctan(x+y),\quad B(x)=\sin(x), \quad \zeta(x,y)=\sqrt{x^2+y^2},
\end{align*} 
let $x_0\sim N(0,1)$, it is equivalent to 
\begin{align*}
x_t=x_0+\int_0^t (x_s-x_s^3){\rm d}s+\int_0^t \frac{1}{1+{\rm exp}\{-\mathbb{E}[\arctan(z+x_s)]\big\vert_{z=x_s}\}}{\rm d}s+\int_0^t\sin\left(\mathbb{E}[\sqrt{z^2+x_s^2}]\big\vert_{z=x_s}\right){\rm d}W_s.
\end{align*}
It is easy to verify that $a(x), A(x),B(x), \kappa(x,y), \zeta(x,y)$ satisfy Assumptions \ref{A2.1}-\ref{A2.3}. The corresponding interacting particle system is  
\begin{align*}
X_t^{i,N}=&x_0+\int_0^t (X_s^{i,N}-(X_s^{i,N})^3){\rm d}s+\int_0^t \left(1+\exp\left\{-\frac{1}{N}\sum_{j=1}^N\arctan(X_s^{i,N}+X_s^{j,N})\right\}\right)^{-1}{\rm d}s\\
&+\int_0^t \sin\left(\frac{1}{N}\sum_{j=1}^N \sqrt{(X_s^{i,N})^2+(X_s^{j,N})^2}\right){\rm d}W_s^i,
\end{align*}
and the tamed EM scheme is given by
\begin{align*}
X_{t_{n+1}}^{i,N}=&X_{t_n}^{i,N}+\frac{X_{t_n}^{i,N}-(X_{t_n}^{i,N})^3}{1+\sqrt{\Delta}\vert X_{t_n}^{i,N}-(X_{t_n}^{i,N})^3\vert }\Delta+\left(1+\exp\left\{-\frac{1}{N}\sum_{j=1}^N\arctan(X_{t_n}^{i,N}+X_{t_n}^{j,N})\right\}\right)^{-1}\Delta\\
&+\sin\left(\frac{1}{N}\sum_{j=1}^N \sqrt{(X_{t_n}^{i,N})^2+(X_{t_n}^{j,N})^2}\right)\Delta W_n^i,
\end{align*}
where $t_n=n\Delta, n=0,1,\dots.$ The orders of convergence with respect to the number of particles $N$ are shown in Figure \ref{Fig.main1}. The proxy solution takes $N=2^{11}$. The numerical solutions take $\bar{N} =2^7, 2^8, 2^9, 2^{10}$.
 \begin{figure}[H]
\centering  
\includegraphics[width=0.5\textwidth]{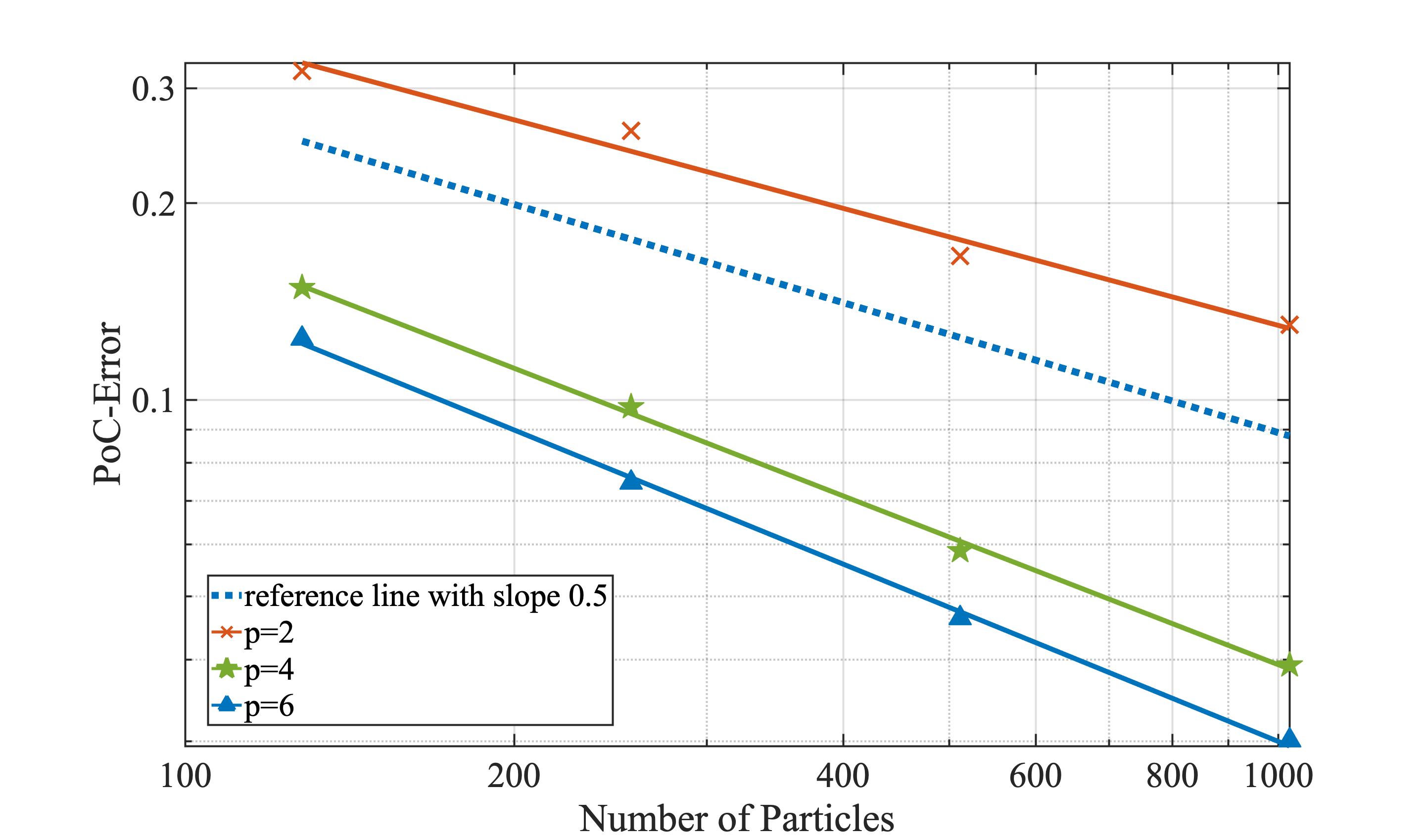}
\caption{$L^p$ convergence rate of PoC for Example \ref{exam1}.}
\label{Fig.main1}
\end{figure}
\end{example}

\begin{example}\label{exam2}\rm
In this example, we test the $L^p$ rate of PoC in different dimensions for $p=2, 4$ and compare the findings to the theoretical upper bounds established by \eqref{L^p rate}. For system \eqref{NIPS} with coefficients taking the form of \eqref{coefficient-1}, we make the following choices: Let $d \ge 2$, $x =(x_1, . . . , x_d)\in\mathbb{R}^d$, $(W_t)_{0\le t\le T}$ is a d-dim Brownian motion, the initial condition is a vector distributed according to $d$-independent $N(0, 1)$-random variables, the coefficients are taken to be $a(x)=x-x^3:=(x_1-x_1^3,\dots,x_d-x_d^3)^{\rm T}$,
\begin{align*}
\kappa(x,y)=\left(\begin{array}{c}{\rm sign}(x_1)\vert x_1+y_1\vert \\ {\rm sign}(x_2)\vert x_2+y_2\vert \\\vdots \\  {\rm sign}(x_d)\vert x_d+y_d\vert\end{array}\right),\quad \zeta(x,y)=\left(\begin{array}{cccc}\sqrt{x_1^2+y_1^2} & x_2& \cdots & x_d\\x_1 & \sqrt{x_2^2+y_2^2} & \cdots & x_d \\ \vdots & \vdots &  & \vdots\\x_1 & x_2 & \cdots & \sqrt{x_d^2+y_d^2}\end{array}\right),
\end{align*} 
and $A(x)=\sin(x), B(x)=\cos(x)$. The tamed EM scheme is given by
\begin{align*}
X_{t_{n+1}}^{i,N}=&X_{t_n}^{i,N}+\frac{(X_{t_n}^{i,N}-(X_{t_n}^{i,N})^3)\Delta}{1+\sqrt{\Delta}\vert X_{t_n}^{i,N}-(X_{t_n}^{i,N})^3\vert }+\sin\left(\frac{1}{N}\sum_{j=1}^N\kappa(X_{t_n}^{i,N},X_{t_n}^{j,N})\right)\Delta+\cos\left(\frac{1}{N}\sum_{j=1}^N \zeta(X_{t_n}^{i,N},X_{t_n}^{j,N})\right)\Delta W_n^i.
\end{align*}
We performed PoC rate tests in the sense of $L^2$, $L^4$ and $L^6$, respectively. When $p = 2$, we simulated the equations for dimensions $d=2$ (then $p>d/2$), $d=4 (p=d/2)$ and $d=6 (p\in[2, d/2))$, when $p = 4$, we simulated the equations for dimensions $d=4$ ($p>d/2$), $d=8 (p=d/2)$ and $d=10 (p\in[2, d/2))$, and when $p = 6$, we simulated the equations for dimensions $d=4, 12, 14$. It can be found in  Figures \ref{Fig.main2} and \ref{Fig.main3} that the PoC rates observed numerically are all around $\mathcal{O}(N^{-1/2})$ and do not depend on the dimension of the solution.
\begin{figure}
\centering  
\subfigure[d = 2]{
\label{figure_2.1}		
\includegraphics[width=0.32\textwidth]{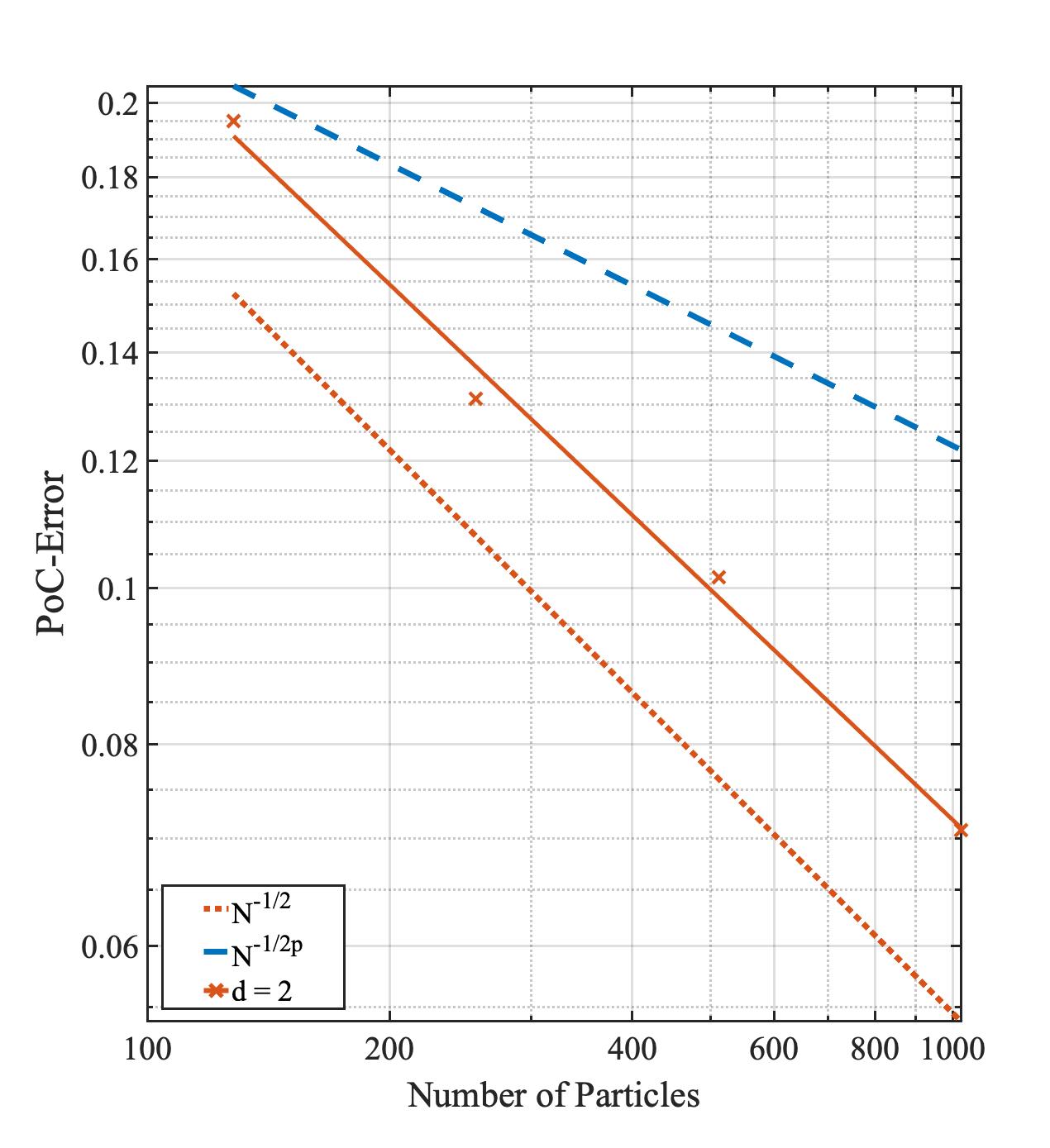}}
\subfigure[d = 4]{
\label{figure_2.2}		
\includegraphics[width=0.32\textwidth]{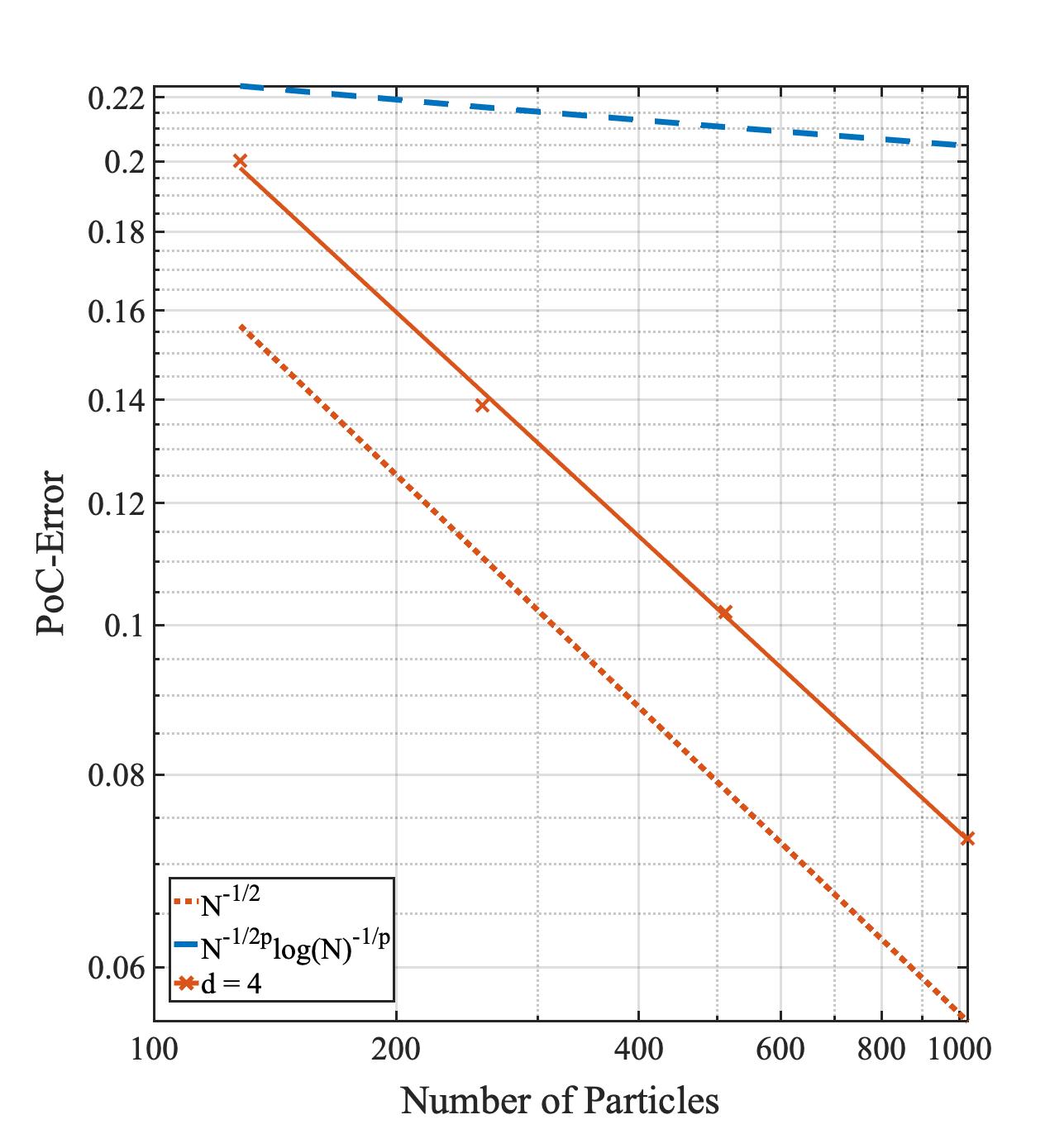}}
\subfigure[d = 6]{
\label{figure_2.3}		
\includegraphics[width=0.32\textwidth]{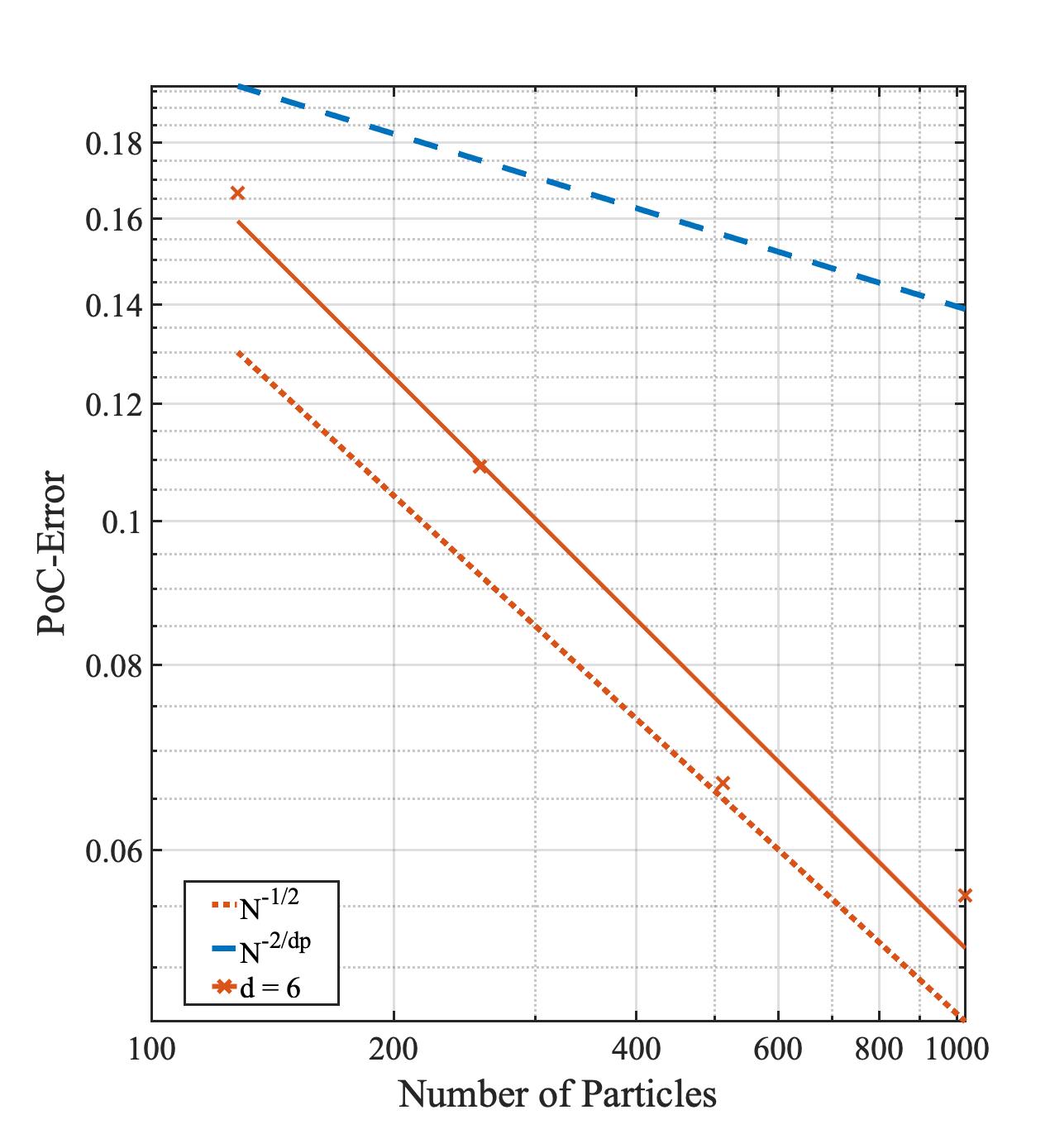}}
\caption{$L^2$ convergence rate of PoC for Example \ref{exam2}.}
\label{Fig.main2}
\end{figure}

\begin{figure}
\centering  
\subfigure[d = 6]{
\label{figure_3.1}		
\includegraphics[width=0.32\textwidth]{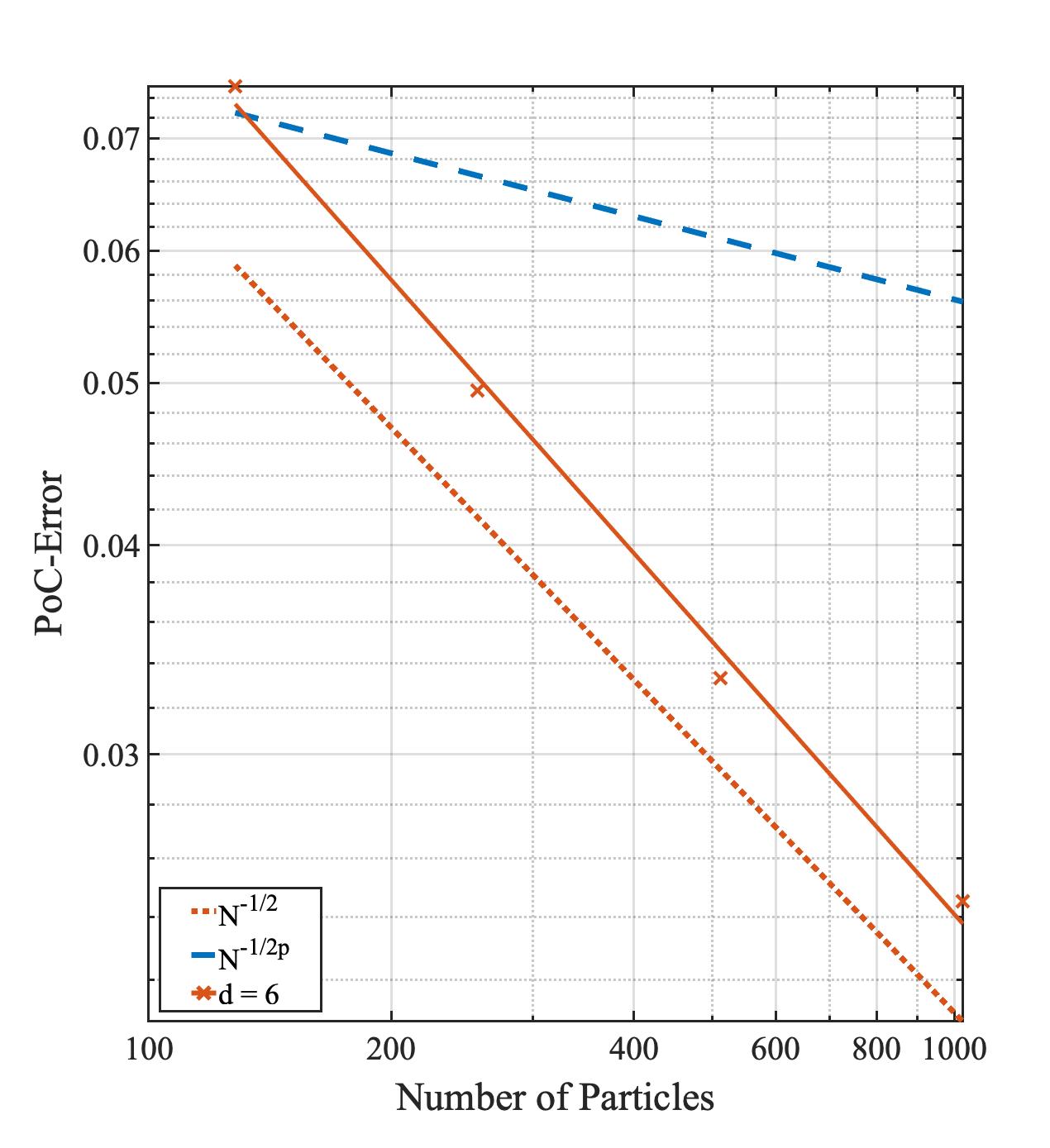}}
\subfigure[d = 8]{
\label{figure_3.2}		
\includegraphics[width=0.32\textwidth]{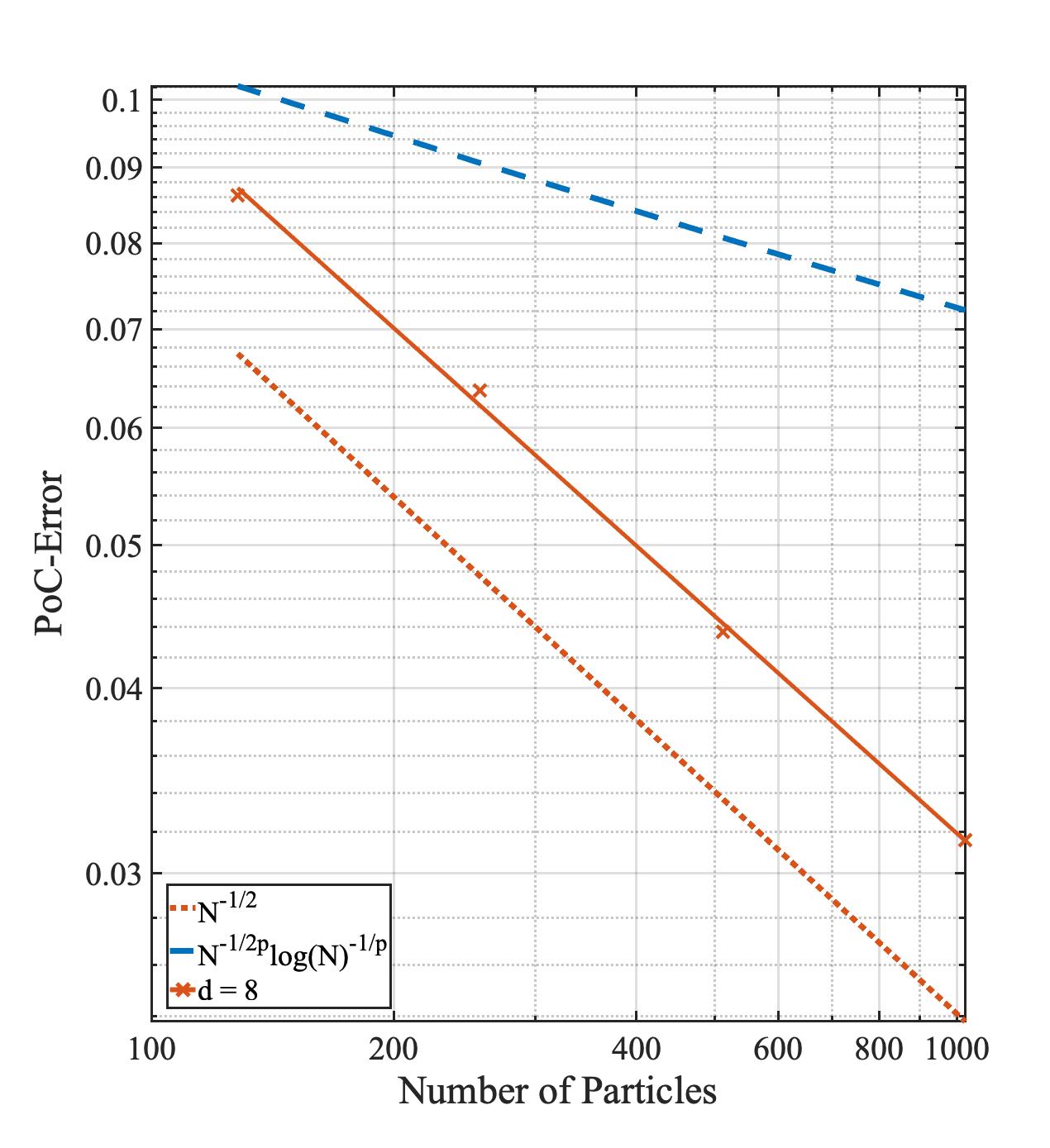}}
\subfigure[d = 10]{
\label{figure_3.3}		
\includegraphics[width=0.32\textwidth]{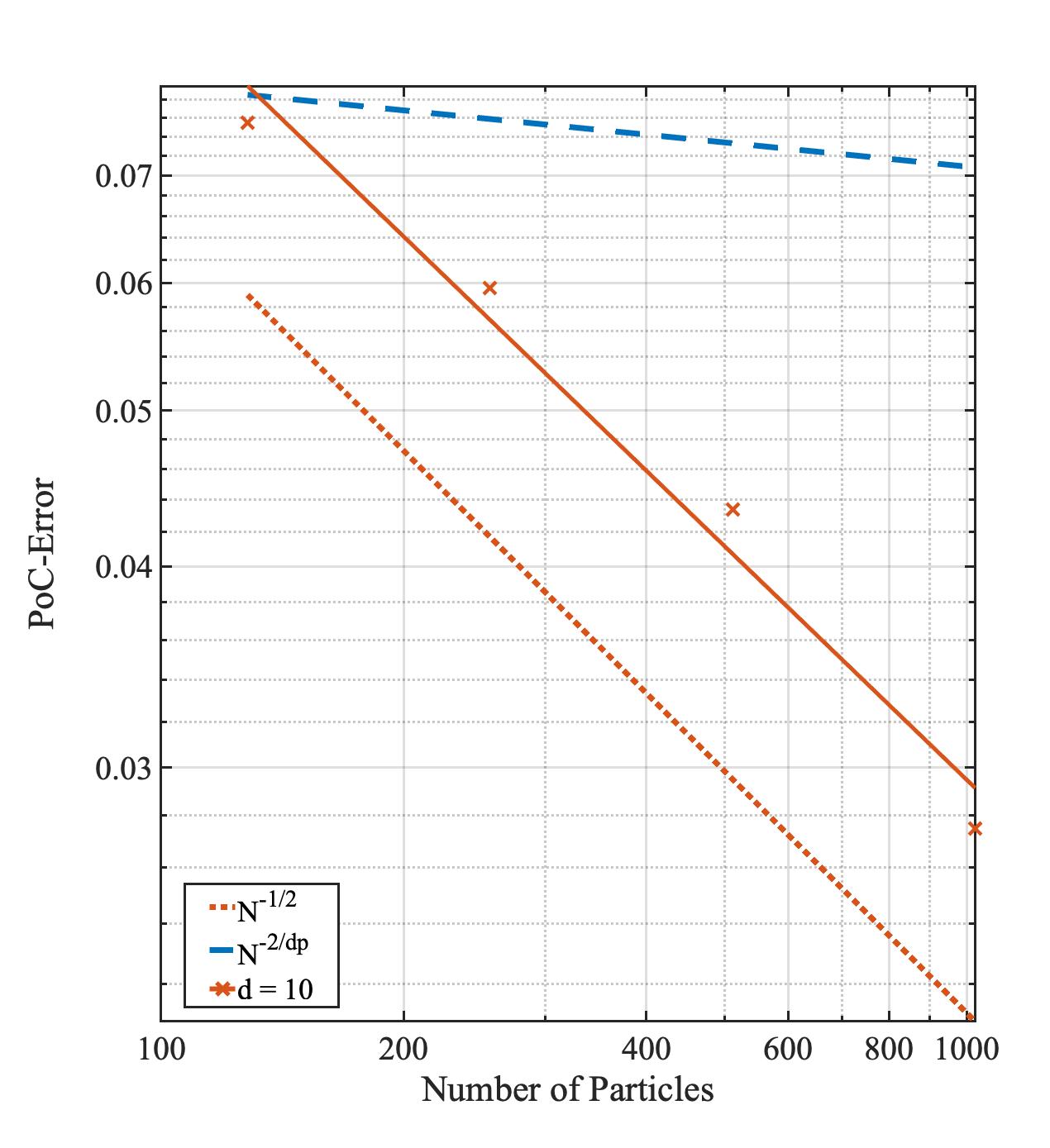}}
\caption{$L^4$ convergence rate of PoC for Example \ref{exam2}.}
\label{Fig.main3}
\end{figure}

\end{example}
\begin{figure}
\centering  
\subfigure[d = 2]{
\label{figure_4.1}		
\includegraphics[width=0.32\textwidth]{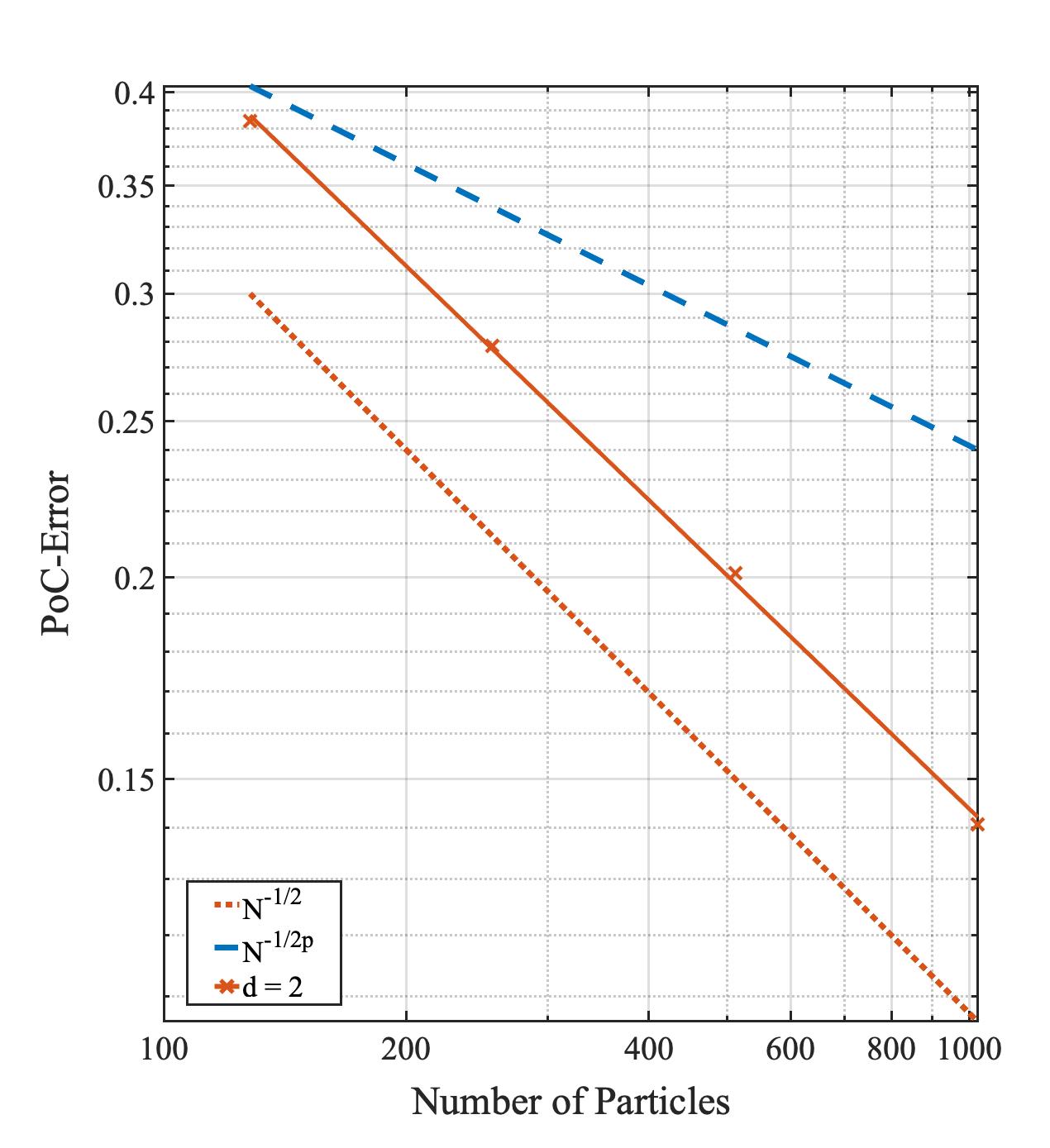}}
\subfigure[d = 4]{
\label{figure_4.2}		
\includegraphics[width=0.32\textwidth]{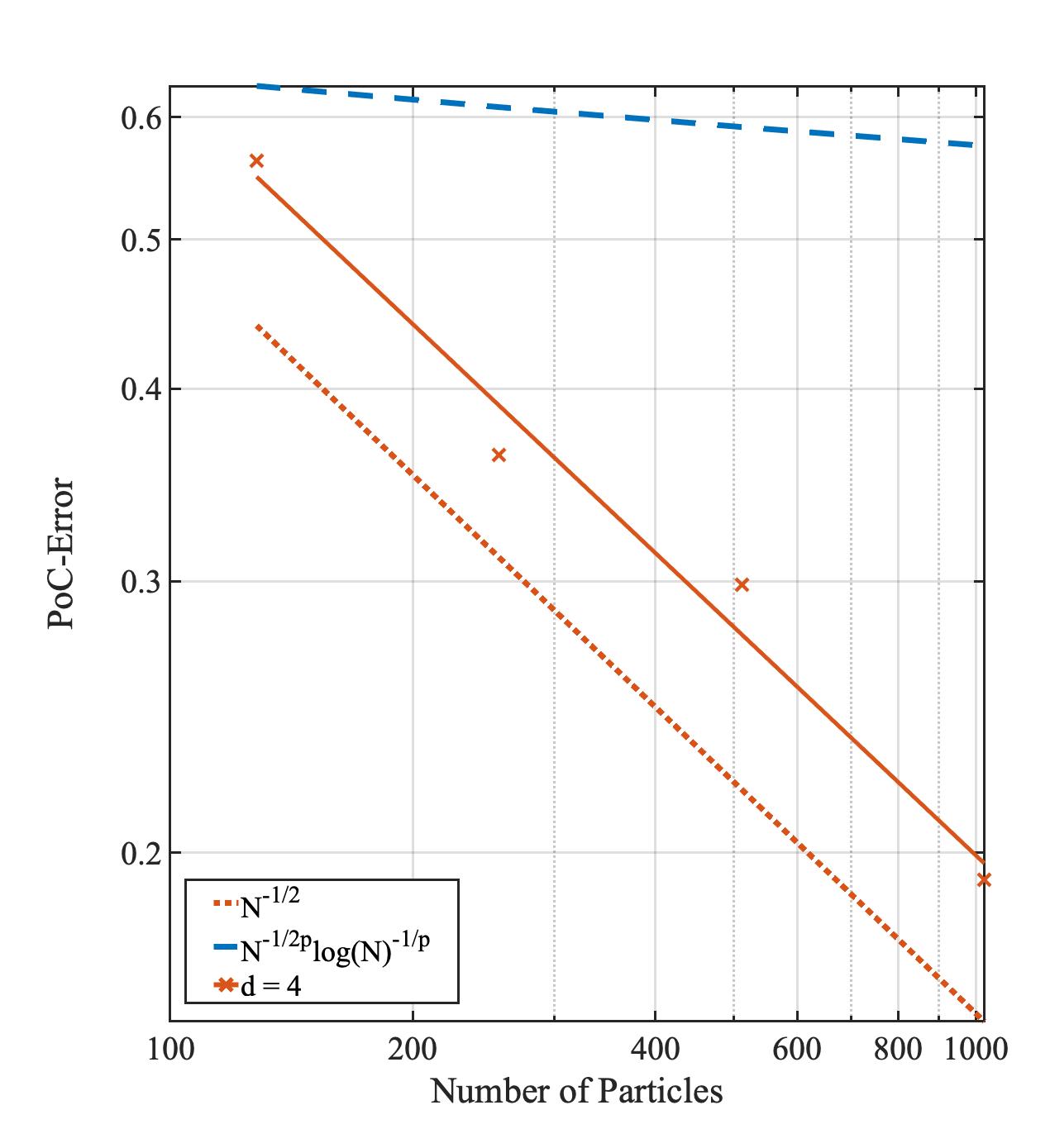}}
\subfigure[d = 6]{
\label{figure_4.3}		
\includegraphics[width=0.32\textwidth]{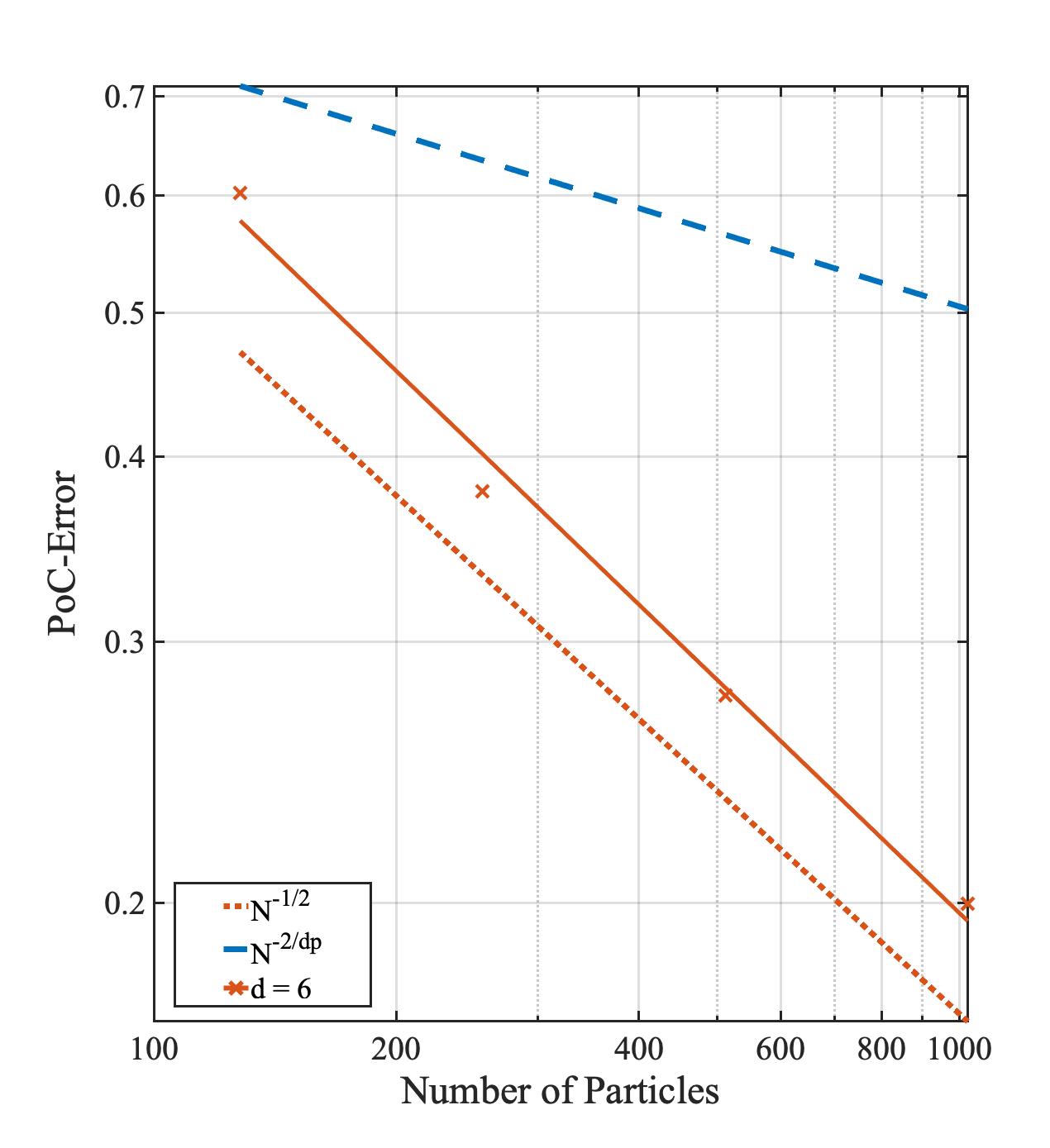}}
\caption{$L^2$ convergence rate of PoC for Example \ref{exam3}.}
\label{Fig.main4}
\end{figure}

\begin{figure}
\centering  
\subfigure[d = 6]{
\label{figure_4.1}		
\includegraphics[width=0.32\textwidth]{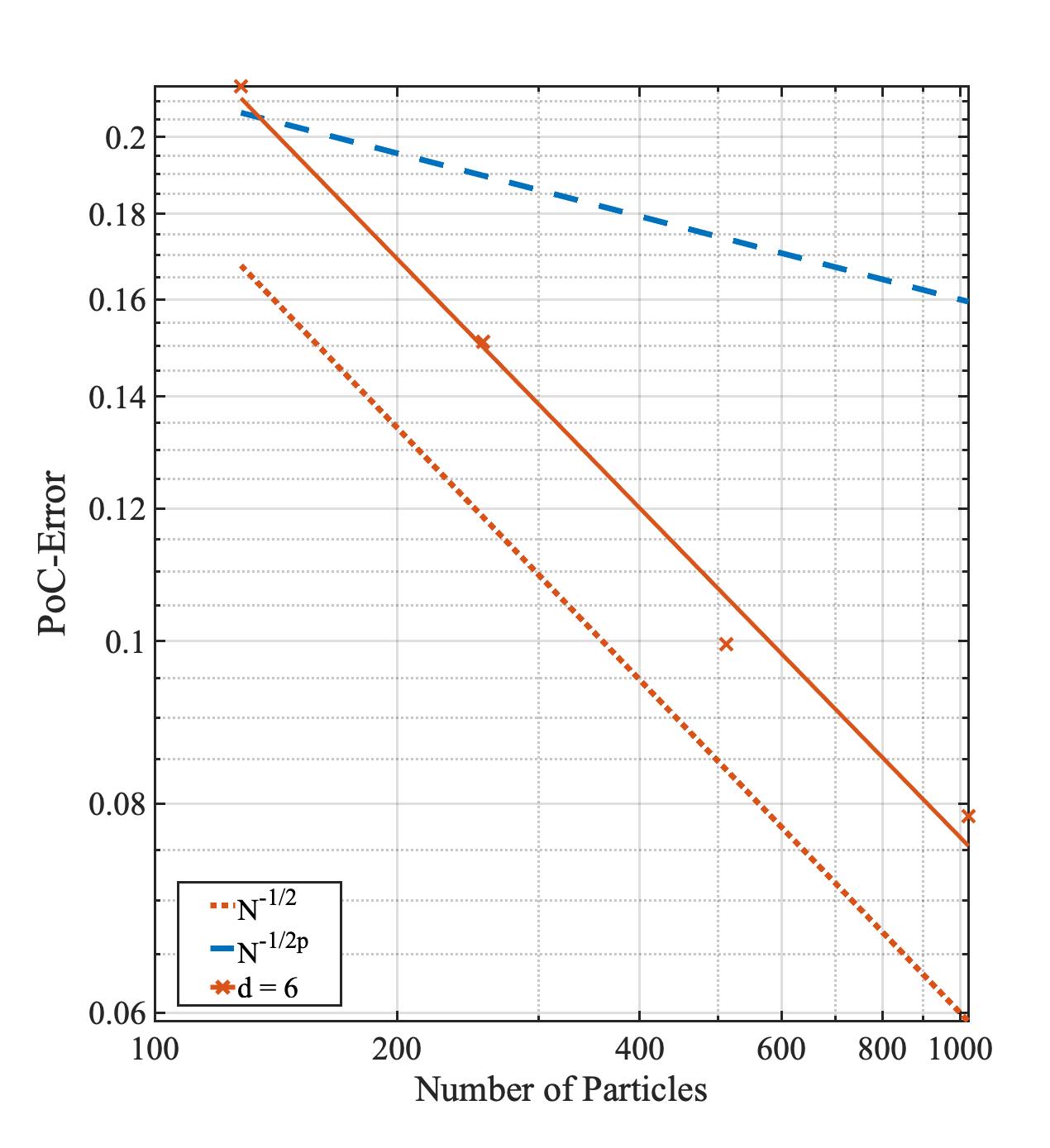}}
\subfigure[d = 8]{
\label{figure_4.2}		
\includegraphics[width=0.32\textwidth]{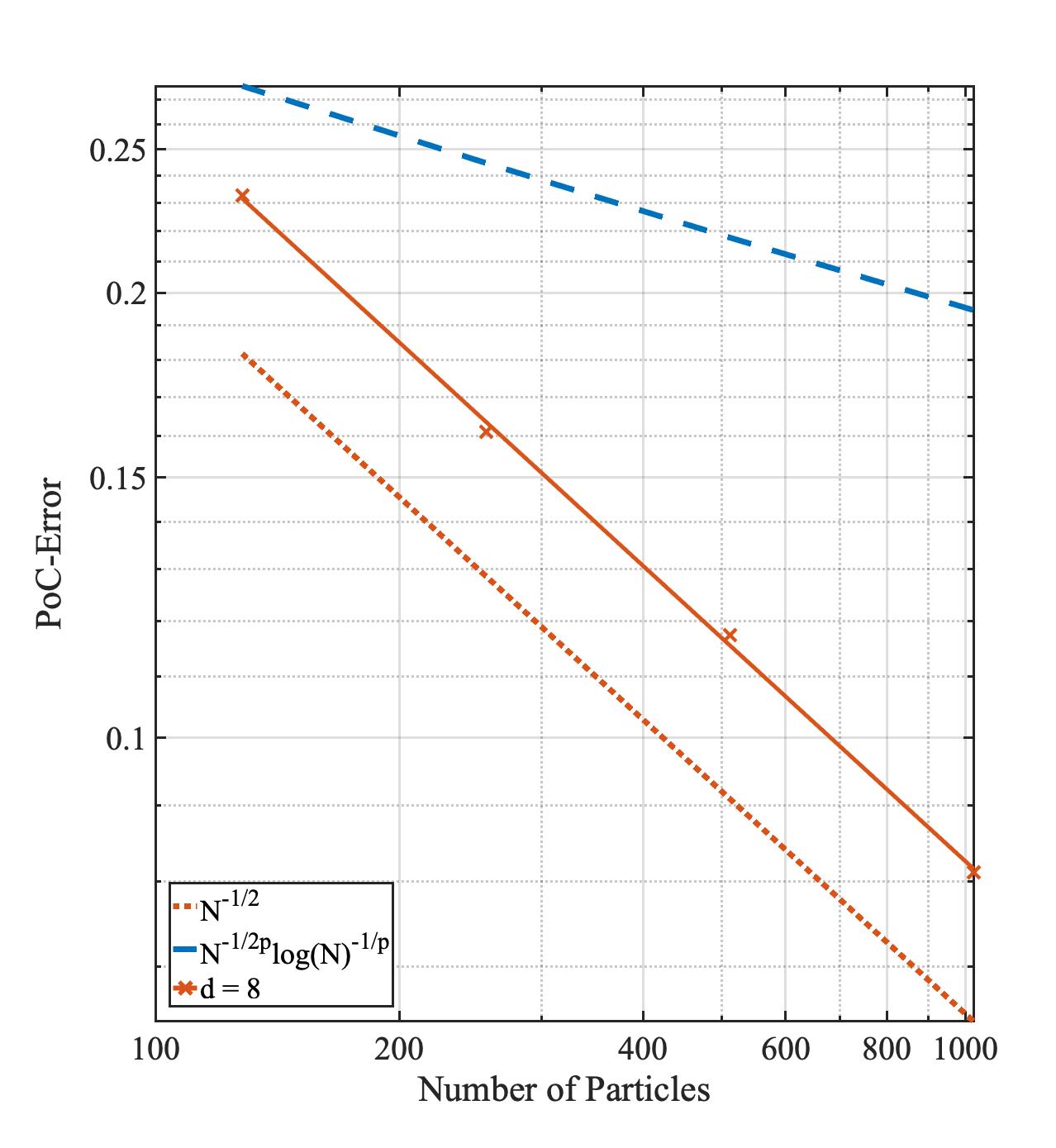}}
\subfigure[d = 10]{
\label{figure_4.3}		
\includegraphics[width=0.32\textwidth]{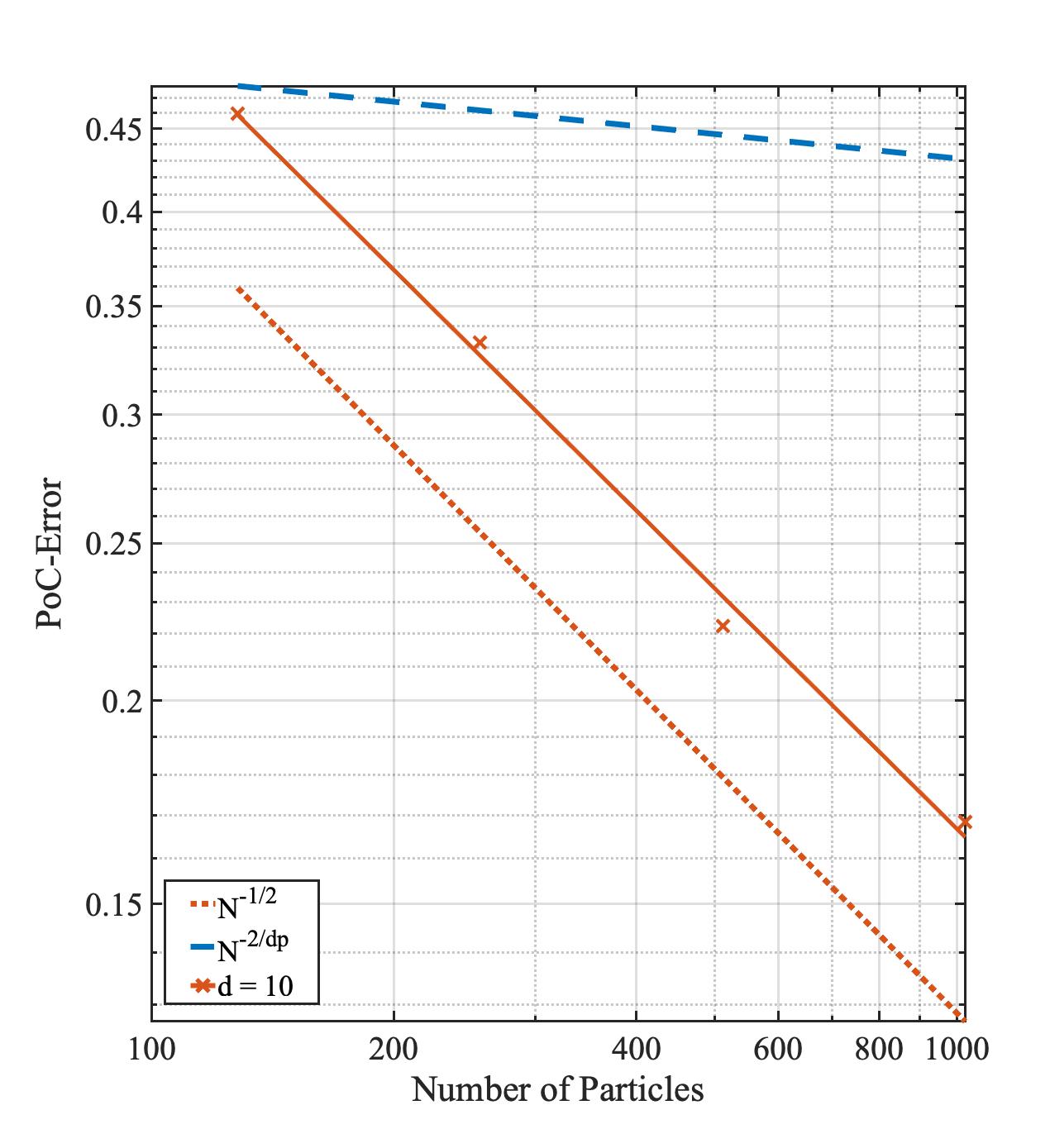}}
\caption{$L^4$ convergence rate of PoC for Example \ref{exam3}.}
\label{Fig.main5}
\end{figure}

\begin{example}\label{exam3}\rm
In this example, we test the rate of PoC for MV-SDEs with the more general nonlinear distribution-dependence coefficients. Consider the $d$-dim nonlinear MV-SDE \eqref{NIPS} with coefficients $a(x)=x-x^5$, $A(x,y)=\frac{1}{1+{\rm e}^{-x-y}}$, $B(x,y)=\sqrt{x^2+y^2}$, $\kappa_1(x,y)=\arctan(x+y)$, $\kappa_2(x,y)=\arctan(x-y)$,
\begin{align*}
 \zeta_1(x,y)=&\left(\begin{array}{cccc}\vert x_1+y_1\vert & x_2& \cdots & x_d\\x_1 & \vert x_2+y_2\vert & \cdots & x_d \\ \vdots & \vdots &  & \vdots\\x_1 & x_2 & \cdots & \vert x_d+y_d\vert\end{array}\right),\quad  \zeta_2(x,y)=\left(\begin{array}{cccc}\vert x_1-y_1\vert & x_2& \cdots & x_d\\x_1 & \vert x_2-y_2\vert & \cdots & x_d \\ \vdots & \vdots &  & \vdots\\x_1 & x_2 & \cdots & \vert x_d-y_d\vert\end{array}\right)
\end{align*} 
~\\
let $x_0\sim N(0,1)$, the the tamed EM scheme is given by  
\begin{align*}
X_{t_{n+1}}^{i,N}=&X_{t_n}^{i,N}+\frac{X_{t_n}^{i,N}-(X_{t_n}^{i,N})^5}{1+\sqrt{\Delta}\vert X_{t_n}^{i,N}-(X_{t_n}^{i,N})^5\vert }\Delta+ \left(1+\exp\left\{-\frac{1}{N}\sum_{j=1}^N\left(\kappa_1(X_{t_n}^{i,N},X_{t_n}^{j,N})-\kappa_2(X_{t_n}^{i,N},X_{t_n}^{j,N})\right)\right\}\right)^{-1}\Delta\\
&+ \sqrt{\left(\frac{1}{N}\sum_{j=1}^N\zeta_1(X_{t_n}^{i,N},X_{t_n}^{j,N})\right)^2+\left(\frac{1}{N}\sum_{j=1}^N\zeta_2(X_{t_n}^{i,N},X_{t_n}^{j,N})\right)^2}\Delta W_n^i.
\end{align*}
The orders of convergence with respect to the number of particles $N$ are shown in Figures \ref{Fig.main4} and \ref{Fig.main5}. The proxy solution takes $N=2^{11}$. The numerical solutions take $\bar{N} =2^7, 2^8, 2^9, 2^{10}$.

\end{example}

\begin{example}\label{exam4}\rm
Finally, we test the PoC rate for MV-SDEs with coefficients contain the 2nd order interaction terms. Constrained by the computational capability of our computers, here we only give numerical verification that the rate of convergence can reach $1/2$ for the following 1-dimensional equation. Let $a(x)=x-x^5$, $A(x)=\tanh(x)$, $B(x)=\frac{1}{1+{\rm e}^{-x}}$, $\kappa(x,y,z)=\vert x+y+z\vert$, $ \zeta(x,y,z)=\frac{x+y}{\sqrt{1+x^2+y^2+z^2}}$.
let $x_0\sim N(0,1)$, the tamed EM scheme is given by  
\begin{align*}
X_{t_{n+1}}^{i,N}=&X_{t_n}^{i,N}+\frac{X_{t_n}^{i,N}-(X_{t_n}^{i,N})^5}{1+\sqrt{\Delta}\vert X_{t_n}^{i,N}-(X_{t_n}^{i,N})^5\vert }\Delta+\tanh\left(\frac{1}{N^2}\sum_{j,r=1}^N \vert X_{t_n}^{i,N}+X_{t_n}^{j,N}+X_{t_n}^{r,N}\vert\right)\Delta\\
&+  \left(1+\exp\left\{-\frac{1}{N^2}\sum_{j,r=1}^N\frac{X_{t_n}^{i,N}+X_{t_n}^{j,N}}{\sqrt{1+(X_{t_n}^{i,N})^2+(X_{t_n}^{j,N})^2+(X_{t_n}^{r,N})^2}}\right\}\right)^{-1}\Delta W_n^i.
\end{align*}
The orders of convergence with respect to the number of particles $N$ are shown in Figure \ref{Fig.main6}. The proxy solution takes $N=2^{8}$. The numerical solutions take $\bar{N} =2^4, 2^5, 2^6, 2^7$.
 \begin{figure}[H]
\centering  
\includegraphics[width=0.5\textwidth]{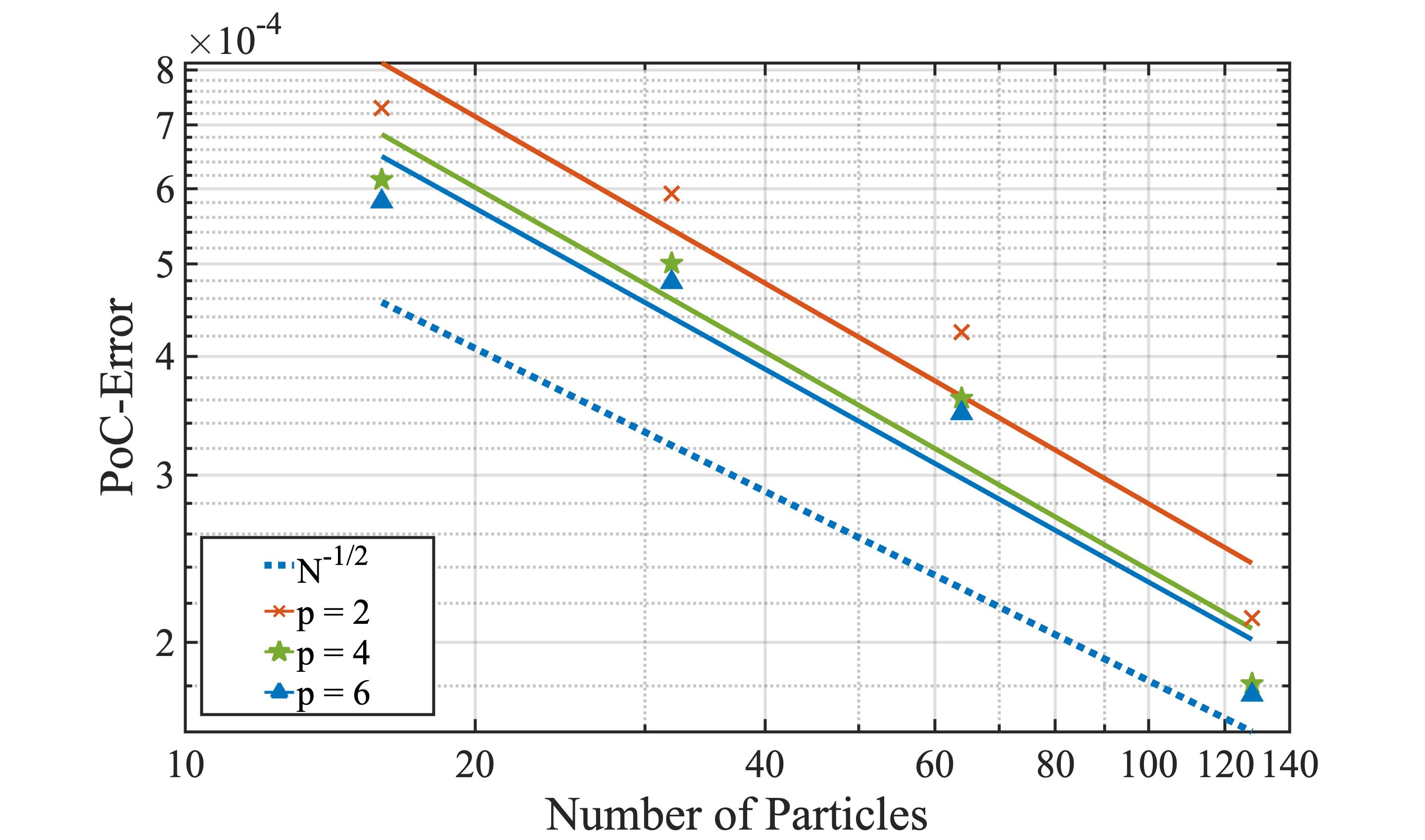}
\caption{$L^p$ convergence rate of PoC for Example \ref{exam4}.}
\label{Fig.main6}
\end{figure}

\end{example}

\section{Discussion}\label{Discussion}
The main contribution of this paper is that we theoretically prove that the PoC convergence in the sense of $L^p, p\ge2$ is of order $\mathcal{O}(N^{-1/2})$, which is dimension-independent, for several class of McKean-Vlasov stochastic differential equations (MV-SDEs) with coefficients non-linear dependence on the measure component. Both theoretical and numerical results show that this rate of convergence is across dimensions. 
However, the equations considered in this work, while nonlinear in their dependence on the measure, still necessitate satisfying the Lipschitz conditions. When the dependence of the coefficients on the measure is highly nonlinear, the relevant research still deserves further exploration. In addition, an off-topic note, in view of the computational time and cost, the particle approximation does not seem to be a good choice when the equations contain higher-order interaction terms, and it would be more important to explore the numerical schemes with lower computational cost.

\section*{Acknowledgments}
Yuhang Zhang is supported by the China Postdoctoral Science Foundation under Grant Number 2024M754160 and the Postdoctoral Fellowship Program of CPSF under Grant Number GZC20242217.
Minghui Song is supported in part by funds from the National Natural Science Foundation of China under Grant Numbers 12471372 and 12071101.

\end{document}